\documentclass[UTF-8,fontset=windows,final]{siamltex}
\pagestyle{myheadings}

\usepackage{tabularx}

\usepackage{subcaption}




\usepackage{multirow}
\usepackage{amsmath, amsfonts, amssymb, mathrsfs}
\usepackage{booktabs, graphics, graphicx, diagbox, enumerate, multirow}
\usepackage{algorithm, algorithmic, tikz}
\usepackage[noadjust]{cite}
\usepackage[top=3cm, bottom=3cm, left=2cm, right=2cm]{geometry}

\usetikzlibrary{arrows.meta}

\renewcommand{\div}{\operatorname{div}}


\numberwithin{equation}{section}

\newtheorem{problem}{Problem}

\allowdisplaybreaks[4]

\normalsize

\newtheorem{remark}{Remark}[section]

\usepackage{hyperref, cleveref}

\hypersetup{
hidelinks,
colorlinks=true,
linkcolor=blue,
citecolor=blue,
urlcolor = blue
}


\crefname{section}{Section}{Sections}
\crefname{subsection}{Section}{Sections}
\crefname{figure}{Figure}{Figures}
\crefname{table}{Table}{Tables}
\crefname{theorem}{Theorem}{Theorems}
\crefname{lemma}{Lemma}{Lemmas}
\crefname{corollary}{Corollary}{Corollaries}
\crefname{define}{Definition}{Definitions}
\crefname{problem}{Problem}{Problems}
\crefname{example}{Example}{Examples}
\crefname{testcase}{Testcase}{Testcases}

\begin{document}
	
	
	\title{A Parallel Solver with Multiphysics Finite Element Method for Poroelasticity Coupled with Elasticity Model\footnote{Last update: \today}}
	
	\author{
		Zhihao Ge\thanks{School of Mathematics and Statistics, Henan University, Kaifeng 475004, PR China ({\tt Email:zhihaoge@henu.edu.cn}).
			The work of this author was supported by National Natural Science Foundation of China(No. 12371393) and Natural Science Foundation of Henan Province(No. 242300421047).}
		\and
		Chengxin Wang\thanks{School of Mathematics and Statistics, Henan University, Kaifeng 475004, P.R. China.}
		%
	}
	
	\maketitle
	
	
	\setcounter{page}{1}
	
	
	
	\begin{abstract}
    In this paper, we propose a parallel solver for solving the quasi-static linear poroelasticity coupled with linear elasticity model in the Lagrange multiplier framework. Firstly, we reformulate the model into a coupling of the nearly incompressible elasticity and an unsteady affection-diffusion equations by setting new variable ``elastic pressure" and ``volumetric fluid content". And we introduce a Lagrange multiplier to guarantee the normal stress continuity on the interface. Then, we give the variational formulations in each subdomain and choose the $\boldsymbol{P}_k$-$P_1$-$P_1$ mixed finite element tuple for poroelasticity subdomain, and $\boldsymbol{P}_k$-$P_1$ finite element pair ($k=1,2$) for elasticity subdomain and the backward Euler scheme for time. Also, we  propose a parallel solver for solving the fully discrete scheme at each time step-- the FETI method with a classical FETI preconditioner for solving the Lagrange multiplier and calculating the subproblems in each subdomain in parallel. And we show several numerical tests to validate the computational efficiency and the convergence error order, and we consider Barry-Mercer's model as the benchmark test to show that there no oscillation in the computed pressure. Finally, we draw conclusions to summarize the main results of this paper.
    \end{abstract}
    
    
    
    \begin{keywords}
    Poroelasticity, elasticity, multiphysics finite element method, domain decomposition, Schur complement method, FETI method
    
    \end{keywords}
	
	\begin{AMS}
		65N30. 
	\end{AMS}
	
	\pagestyle{myheadings}
	\thispagestyle{plain}
	\markboth{ZHIHAO GE, CHENGXIN WANG}{ A PARALLEL SOLVER WITH MFEM FOR POROELASTICITY COUPLED WITH ELASTICITY MODEL}
	

\section{Introduction}
\label{sect:introduction}

In this paper, we study the non-overlapping coupled poroelasticity and elasticity model, which is a challenging multiphysics and multidomain problem with applications to multiple industrial sectors. One typical practical usage is the simulation to the reservoir\cite{abdi_modeling_2021, dai_co2_2017, feng_phase-field_2023}. For example, attaching increasing importance to energy conservation and carbon reduction, the carbon dioxide underground injection sequestration techniques, as an efficient selection for reducing the greenhouse gases in the atmosphere, is now a popular research topic, in which one of the motivation is that the supercritical CO$_2$ fracturing has better performance in fluid fracturing and oil production than hydraulic fracturing. In addition, the model of coupled poroelasticity and elasticity can be applied in the biomechanical engineering\cite{yankova_study_2020, chen_physical_2023}, such as simulation of the brain injury and tumor growth.

The coupled poroelasticity and elasticity model consists of at least two different subdomains, that is, the poroelastic "pay-zone" and elastic "nonpay-zone". We adopt the Biot's\cite{biot_general_1941, terzaghi_theoretical_1943} system of poroelasticity to model the poroelastic media in the pay-zone and a classic elasticity model in the nonpay-zone. The coupled model can be seen as an elastic model in which part of the domain has a porous structure and is saturated by a fluid\cite{coussy_poromechanics_2004}. For the respective models in each subdomain.

The coupled poroelasticity and elasticity model was first studied by Girault et al\cite{girault_domain_2011}, in which the displacement $\mathbf u$ was partitioned into two parts, one of which is dependent on the fluid pressure $p$ and the other is independent. And for the numerical study, the discontinuous Galerkin jumps and mortars method was adopted for semi-discretizing the model, and the corresponding error estimates was given. A Mandel's model was studied as a numerical example, in which the errors of the model with its order were validated.

In practical actuality, various discretization schemes of the coupled poroelasticity and elasticity model can be chosen. For the model of poroelasticity, Phillips and Wheeler\cite{phillips_coupling_2007, phillips_coupling_2007-1} proposed a mixed finite element method for solving a quasi-linear model of poroelasticity. However, the tendency of Lam\'e constant to infinity $\lambda\to\infty$ will lead to a so-called locking phenomenon, that is, the solution will lose convergence if $\lambda$ is set to be sufficiently large. To overcome the locking, one of the efficient way is to adopt the discontinuous Galerkin method\cite{phillips_overcoming_2009}. Another strategy is the multiphysics finite element method\cite{feng_analysis_2018}, where two new variables were introduced so that the elastic momentum balance part was rewritten in a generalized Stokes form, and validated the absence of locking by some numerical examples.

The main motivation for studying various coupled model is to adopt alternating parallelized algorithms to improve the calculation efficiency. Combining with suitable domain decomposition frameworks, we can partition the domain into several solvable subdomains with suitable transmission conditions\cite{mathew_domain_2008}. For example, the study in \cite{girault_domain_2011} introduced the model on the Steklov-Poincar\'e framework, which suggests various iterative algorithms for its solution, such as Dirichlet-Neumann method, Neumann-Neumann method and etc.. In this paper we adopt the Lagrange multiplier framework, that is, we introduced the normal stress on the interface from both subdomains as a unknown Lagrange multiplier variable $\boldsymbol{\lambda}$, and construct a saddle point system, where we can utilize the algorithms for iterating the Lagrange multiplier to approximate the transmission conditions, such as Uzawa's algorithm\cite{koko2010uzawa, ning2018uzawa}, non-overlapping Schwarz method\cite{magoules2004non, magoules2005optimal, antonietti2007schwarz}, finite element tearing and interconnecting(FETI) method\cite{lee2010mixed, lee2012analysis, lee2022feti, chu2025block} and etc..

In this paper, we adopt FETI method to solve the coupled poroelasticity and elasticity model with multiphysics finite element approximation, which will be presented in the rest of this paper organized in several sections. In \cref{sect:the_model} we present the coupled poroelasticity and elasticity model in a Lagrange multiplier framework, where the normal stress on the interface is introduced as the Lagrange multiplier term, then we reformulate the model by introducing new variables. Variational formulation and the corresponding semi-discretization of the reformulated model is given, in which a $\boldsymbol{P}_2$-$P_1$-$P_1$-$P_1$ finite element method and a $\boldsymbol{P}_2$-$P_1$ finite element method is chosen for the discretization, and we proved the existence and uniqueness of solutions to the model in each subdomain, and prescribed the equivalence of solution between coupled model and decoupled sub-models. In \cref{sect:algorithms}, a fully-discrete scheme is proposed by adopting the backward-Euler time-advancing method, which can be seen as a saddle point problem at each time step. Finally, in \cref{sect:computational_resutls}, several benchmark numerical experiments are provided to show the performance of the proposed approach and methods.

\section{The Coupled Poroelasticity and Elasticity Model}
\label{sect:the_model}

Consider a bounded polygonal domain $\Omega\subset\mathbb R^d(d=2,3)$ with the continuous boundary $\partial\Omega$. Suppose that $\Omega$ is partitioned into two non-overlapping subdomains $\Omega^P,\Omega^E$, which represents the quasi-static poroelastic reservoir layer and the elastic caprock layer respectively:
\[
\overline{\Omega}=\overline{\Omega^P\cup\Omega^E},\quad \Omega^P\cap\Omega^E=\emptyset.
\]
In this paper we use the appellation to each region in \cite{girault_domain_2011}, that is, the porous ``pay-zone" and non-porous ``nonpay-zone". The pay-zone $\Omega^P$ satisfies consolidation model for a linear elastic, homogeneous, isotropic, porous solid media saturated with a slightly compressible fluid. The nonpay-zone $\Omega^E$, a linear elasticity model will be adopted. For convenience when describing the finite element approximation, we assume that the domain $\Omega$ is a bounded polygon with boundary $\partial\Omega$ and interface $\varGamma=\overline{\Omega^{P}}\cap\overline{\Omega^E}$. For any variable $\mathcal G$, let $\mathcal G_{\mathcal D}:=\left.\mathcal G\right|_{\Omega^\mathcal D}\ (\mathcal D=P,E)$ denotes its restriction in subdomain $\Omega^{\mathcal D}$ (unless an extra description is given, we will use the superscript notation $\mathcal{D}$ to represent either subdomain through this paper). The normal vectors $\mathbf n_P$, $\mathbf n_E$, $\mathbf n_{P\to E}$ and $\mathbf n_{E\to P}$ are as shown in \cref{fig:fig-domain}.
\begin{figure}[!htbp]
\centering
\begin{tikzpicture}
\draw (-4, 0) -- (4, 0);
\draw (-4, 2) -- (4, 2) node [anchor=north west] {$\varGamma$};
\draw (-4, 3) -- (4, 3);
\draw (-4, 0) -- (-4, 3);
\draw (4, 0) -- (4, 3);
\draw (0, 0.8) node  {$\Omega^P$};
\draw (0, 2.3) node {$\Omega^E$};
\draw[-{Stealth[length=2mm,width=2mm]}] (0.75, 2) -- (0.75, 2.5) node [anchor=north west] {$\mathbf n_{P\to E}$};
\draw[-{Stealth[length=2mm,width=2mm]}] (4, 1) -- (4.5, 1) node [near end, below] {$\mathbf n_P$};
\draw[-{Stealth[length=2mm,width=2mm]}] (-1.25, 2) -- (-1.25, 1.5) node [anchor=south west] {$\mathbf n_{E\to P}$};
\draw[-{Stealth[length=2mm,width=2mm]}] (-4, 2.5) -- (-4.5, 2.5) node [near end, below] {$\mathbf n_E$};
\end{tikzpicture}
\caption{A top-bottom coupled structure of domains.}
\label{fig:fig-domain}
\end{figure}
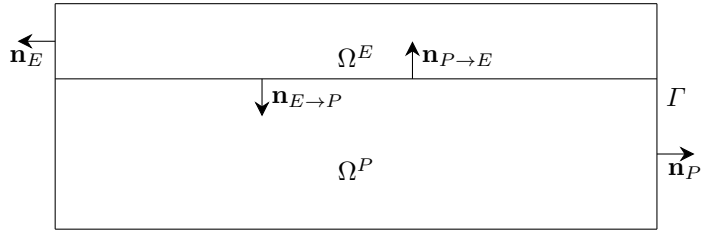

\subsection{Governing Equations}
In this paper, we consider the quasi-static poroelasticity model in $\Omega^P$ and elasticity in $\Omega^E$ interconnected by the interface $\varGamma$ \cite{phillips_coupling_2007, phillips_coupling_2007-1, girault_domain_2011}:
	Find $\mathbf u_p, \mathbf u_E, p$ for all $T>0$ such that 
	\begin{alignat}{3}
		\label{eqn:momentum_balance_P}    -\div\boldsymbol{\sigma}_P(\mathbf u_P) + \alpha\nabla p& = \mathbf f_P && \qquad\text{in }\Omega^P\times(0,T], \\
		\label{eqn:mass_balance}    \left( c_0p+\alpha\div\mathbf u \right)_t + \div\mathbf w(p)& = z && \qquad\text{in }\Omega^P\times(0,T], \\
		\label{eqn:momentum_balance_E}    -\div\boldsymbol{\sigma}_E(\mathbf u_E)& = \mathbf f_E && \qquad\text{in }\Omega^E\times(0,T],
	\end{alignat}
where that $\mathbf u_P, \mathbf u_E$ are the solid displacements respectively in $\Omega^P,\ \Omega^E$ and $p$ is the fluid pressure in $\Omega^P$. To close the model, we set the following boundary and initial conditions: 
\begin{alignat}{3}
    \label{BCO}\mathbf u_P&=\mathbf 0 &&\qquad\text{on }\varGamma^{P}_d\times(0,T], \\
    \label{BCT}\hat{\boldsymbol{\sigma}}_P(\mathbf{u}_P,p)\cdot\mathbf{n}_P&=\mathbf t_P&&\qquad \text{on }\varGamma^{P}_t\times(0,T], \\
    \label{BCP}p&=0&&\qquad\text{on }\varGamma^{P}_p\times(0,T], \\
    \label{BCF}\mathbf w(p)\cdot\mathbf{n}_P&=z_w&&\qquad \text{on }\varGamma^{P}_f\times(0,T], \\
    \label{BCOE}\mathbf u_E&=\mathbf 0&&\qquad\text{on }\varGamma^{E}_d\times(0,T], \\
    \label{BCTE}\boldsymbol{\sigma}_E(\mathbf{u}_E)\cdot\mathbf{n}_E&=\mathbf{t}_{E}&&\qquad \text{on }\varGamma^{E}_t\times(0,T], \\
    \label{ICU}\mathbf u(\mathbf x, 0)&=\mathbf u_0(\mathbf x)&&\qquad\text{in $\Omega$}, \\
    \label{ICP}p(\mathbf x, 0)&=p_0(\mathbf x)&&\qquad\text{in $\Omega^{P}$},
\end{alignat}
and the following transmission conditions are introduced for interconnecting the two subdomains $\Omega^P$ and $\Omega^E$:
\begin{alignat}{3}
 \label{TC1}&\text{the displacement on $\varGamma$ is continuous: }&&\qquad\mathbf{u}_P=\mathbf{u}_E, \\
 \label{TC2}&\text{the normal stress on $\varGamma$ is continuous: }&&\qquad-\hat{\boldsymbol{\sigma}}(\mathbf{u}_P,p)\cdot\mathbf n_{P\to E}=\boldsymbol{\sigma}_E(\mathbf u_E)\cdot\mathbf n_{E\to P}, \\
 \label{TC3}&\text{there is no fluid from $\Omega^P$ to $\Omega^E$: }&&\qquad\mathbf{w}\cdot\mathbf n_{P\to E}=0,
 \end{alignat}
where $\partial\Omega^{P}=\varGamma^{P}_d\cup\varGamma^{P}_t\cup\varGamma=\varGamma^{P}_p\cup\varGamma^{P}_f\cup\varGamma$ and $\partial\Omega^E=\varGamma^E_d\cup\varGamma^E_t\cup\varGamma$.

The problem \eqref{eqn:momentum_balance_P}-\eqref{eqn:momentum_balance_E} consists of a momentum balance in both $\Omega^P$ and $\Omega^E$ and a mass balance in $\Omega^P$, in which $\boldsymbol{\sigma}_\mathcal{D}(\mathbf u_{\mathcal D})$ represents the standard stress tensor from elasticity, and in $\Omega^P$ the Darcy's law holds for the volumetric flux $\mathbf w(p)$ relative linearly to $p$. For convenience, the stress and fluid pressure term in \eqref{eqn:momentum_balance_P} can be merged as the so-called total stress tensor noted by $\hat{\boldsymbol{\sigma}}_P(\mathbf u_P,p)$ (see \cref{tab:summary_of_constitutive_relations}). Parameters involved in the problem \eqref{eqn:momentum_balance_P}-\eqref{eqn:momentum_balance_E} are listed in \cref{tab:summary_of_involved_parameters}. In the assumption that the matrix is isotropic, the permeability tensor $\boldsymbol{K}$ is symmetric and uniformly positive definite, that is, for any choice of $\mathbf x$, the spectrum of $\boldsymbol{K}(\mathbf x)$ has both positive lower and upper bounds.

\begin{table}[H]
  \centering
  \caption{Summary of constitutive relations}
  \label{tab:summary_of_constitutive_relations}
  \begin{tabularx}{\textwidth}{XX}
    \toprule[1pt]
    Notation     &      Description    \\
    \midrule
    $\hat{\boldsymbol{\sigma}}_P(\mathbf u_P,p):=\boldsymbol{\sigma}_P(\mathbf u_P)-\alpha p\mathbf I$     &    Total stress tensor in $\Omega^P$    \\
    $\boldsymbol{\sigma}_{\mathcal D}(\mathbf u_{\mathcal D}):=\lambda_\mathcal{D}\div\mathbf u_\mathcal{D}+2\mu_\mathcal{D}\boldsymbol{\varepsilon}(\mathbf u_\mathcal{D})$     &    Effective stress tensor in $\Omega^{\mathcal{D}}$   \\
    $\mathbf w(p):=-\frac{\boldsymbol{K}}{\mu_f}\left(\nabla p-\rho_f\mathbf g\right)$    &    Volumetric fluid flux in $\Omega^P$ \\
    \bottomrule[1pt]
  \end{tabularx}
\end{table}

\begin{table}[!htbp]
  \centering
  \caption{Summary of involved parameters}
  \label{tab:summary_of_involved_parameters}
  \begin{tabularx}{\textwidth}{XX}
    \toprule[1pt]
    Notation     &      Description    \\
    \midrule
    $\lambda_\mathcal{D},\ \mu_\mathcal{D}$    &    Lam\'e constants    \\
    $\alpha$    &    Biot-Willis constant    \\
    $c_0$    &    Fluid specific storage coefficient    \\
    $\boldsymbol{K}$    &    Skeleton permeability tensor    \\
    $\mu_f$    &    Fluid viscosity    \\
    \bottomrule[1pt]
  \end{tabularx}
\end{table}

\subsection{Multiphysics Approach}
To reveal the underlying multiphysics process and overcome the locking phenomenon for the displacement variables of poroelasticity and elasticity and the pressure oscillation in computation, we set the so-called elastic pressure $\xi_\mathcal{D}$ and volumetric fluid content $\eta$ in the following form:
\begin{equation}
    \label{eqn:mreform}
    \xi_P=\alpha p-\lambda_P \epsilon_P,\quad \xi_E=-\lambda_E\epsilon_E,\quad \eta=c_0p+\alpha\epsilon_P,\quad\text{where}\quad \epsilon_\mathcal{D}:=\div\mathbf u_\mathcal{D}
\end{equation}
Since the new variables in \eqref{eqn:mreform} are linear combinations of the original variables, we can rewrite the original variables in terms of the new ones: \[
p=\kappa_1\xi_P+\kappa_2\eta,\quad \epsilon_P=\kappa_3\xi_P-\kappa_1\eta,\quad \epsilon_E=-\lambda_E^{-1}\xi_E,
\]where\[
\kappa_1=\frac{\alpha}{\alpha^2+c_0\lambda_P }, \quad \kappa_2=\frac{\lambda_P}{\alpha^2+c_0\lambda_P },\quad \kappa_3=\frac{c_0}{\alpha^2+c_0\lambda_P }.
\]

Using the above notations, we reformulate the problem \eqref{eqn:momentum_balance_P}-\eqref{eqn:momentum_balance_E} into the following nearly incompressible coupled poroelasticity and elasticity model:
	\begin{alignat}{3}
		\label{eqn:momentum_balance_reform}-2\mu_P\div\boldsymbol{\varepsilon}(\mathbf u_P)+\nabla\xi_P&=\mathbf f_{P}&&\qquad \text{in }\Omega^{P}\times(0,T], \\
		\label{eqn:divR1}\kappa_3\xi_P+\div\mathbf{u}_P-\kappa_1\eta&=0&&\qquad\text{in }\Omega^{P}\times(0,T], \\
		\label{eqn:mass_balance_reform}\eta_t+\div\mathbf w(p)&=z&&\qquad \text{in }\Omega^{P}\times(0,T], \\
		\label{eqn:momentum_balance_reform_E}-2\mu_E\div\boldsymbol{\varepsilon}(\mathbf u_E)+\nabla\xi_E&=\mathbf f_{E}&&\qquad \text{in }\Omega^{E}\times(0,T], \\
		\label{eqn:divR1_E}\lambda_E^{-1}\xi_E+\div \mathbf{u}_E&=0 &&\qquad\text{in }\Omega^{E}\times(0,T].
	\end{alignat}
	
Denote the total stress of the reformulated poroelasticity model by $\tilde{\boldsymbol{\sigma}}_\mathcal{D}(\mathbf u_\mathcal{D},\xi_\mathcal{D}):=2\mu_\mathcal{D}\boldsymbol{\varepsilon}(\mathbf u_\mathcal{D})-\xi_\mathcal{D}\mathbf I$, then we can rewrite the boundary conditions as 
\begin{alignat}{3}
    \label{BCOR}\mathbf u_P&=\mathbf 0&&\qquad\text{on }\varGamma^{P}_d\times(0,T], \\
    \label{BCTR}\tilde{\boldsymbol{\sigma}}(\mathbf{u}_P,\xi_P)\cdot\mathbf{n}_P&=\mathbf t_P&&\qquad \text{on }\varGamma_t^P\times(0,T], \\
    \label{BCPR}p&=p_D&&\qquad\text{on }\varGamma_p^{P}\times(0,T], \\
    \label{BCFR}\mathbf w(p)\cdot\mathbf{n}_P&=z_w&&\qquad \text{on }\varGamma_f^{P}\times(0,T], \\
    \label{BCORE}\mathbf u_E&=\mathbf 0&&\qquad\text{on }\varGamma^{E}_d\times(0,T], \\
    \label{BCTRE}\tilde{\boldsymbol{\sigma}}(\mathbf{u}_E,\xi_E)\cdot\mathbf{n}_E&=\mathbf{t}_{E}&&\qquad \text{on }\varGamma_t^E\times(0,T],
\end{alignat}
 and initial conditions:
\begin{alignat}{3}
    \label{ICUR}\mathbf u_P(\mathbf x, 0)&=\mathbf u_{P,0}(\mathbf x)&&\qquad\text{in $\Omega^{P}$}, \\
    \label{ICRE}\mathbf u_E(\mathbf x, 0)&=\mathbf u_{E,0}(\mathbf x)&&\qquad\text{in $\Omega^{E}$}, \\
    \label{ICPR}p(\mathbf x, 0)&=p_0(\mathbf x)&&\qquad\text{in $\Omega^{P}$}, \\
    \label{ICER}\eta(\mathbf x,0)&=\eta_0(\mathbf x)=c_0 p_0(\mathbf x)-\alpha\div\mathbf u_0(\mathbf x)&&\qquad\text{in $\Omega^{P}$}, \\
    \label{ICXR}\xi_P(\mathbf x,0)&=\xi_{P,0}(\mathbf x)=\alpha p_0(\mathbf x)-\lambda_P\div\mathbf u_0(\mathbf x)&&\qquad\text{in $\Omega^{P}$}, \\
    \label{ICXRE}\xi_E(\mathbf x,0)&=\xi_{E,0}(\mathbf x)=-\lambda_E\div\mathbf u_0(\mathbf x)&&\qquad\text{in $\Omega^{E}$},
\end{alignat}
and the transmission conditions:
\begin{alignat}{2}
	\label{TC1R}&\qquad\mathbf{u}_P=\mathbf{u}_E&&\qquad\text{on }\varGamma\times(0,T], \\
	\label{TC2R}&\qquad-\tilde{\boldsymbol{\sigma}}_P(\mathbf{u}_P,\xi_P)\cdot\mathbf n_{P\to E}=\tilde{\boldsymbol{\sigma}}_E(\mathbf u_E,\xi_E)\cdot\mathbf n_{E\to P}&&\qquad\text{on }\varGamma\times(0,T], \\
	\label{TC3R}&\qquad\mathbf{w}(p)\cdot\mathbf n_{P\to E}=0&&\qquad\text{on }\varGamma\times(0,T].
\end{alignat}

\subsection{Variational Formulations}
\label{sect:variational_formulations}

In this section we describe the variational formulation of the coupled poroelasticity and elasticity model with a definition of weak solutions to problems \eqref{eqn:momentum_balance_P}-\eqref{TC3} and corresponding reformulated problems \eqref{eqn:momentum_balance_reform}-\eqref{TC3R}. First for any Banach or Hilbert space $X$, we set $\boldsymbol{X}:=[X]^d$. We introduce the following standard notation for Sobolev spaces (see, e.g. \cite{boffi_mixed_2013}): \[
    \begin{aligned}
        \boldsymbol{X}^P&=\boldsymbol{H}^1_{0d}(\Omega^P):=\left\{\mathbf v_P\in\boldsymbol{H}^1(\Omega^P):\ \left.\mathbf v_P\right|_{\varGamma_d^P}=\mathbf 0\right\}, \\
        \boldsymbol{X}^E&=\boldsymbol{H}^1_{0d}(\Omega^E):=\left\{\mathbf v_E\in\boldsymbol{H}^1(\Omega^E):\ \left.\mathbf v_E\right|_{\varGamma_d^E}=\mathbf 0\right\}, \\
        W&=H^1_{0p}(\Omega^P):=\left\{\varphi\in H^1(\Omega^P):\ \left.\varphi\right|_{\varGamma_p^P}=0\right\},
    \end{aligned}
\]
 The inner products of $L^2(\Omega^{\mathcal D})$, $L^2(\varGamma_{t}^{\mathcal D})$ and $L^2(\varGamma_{f}^P)$ are denoted by $(\cdot,\cdot)_{\mathcal D}$, $\left<\cdot,\cdot\right>_{t,\mathcal D}$ and $\left<\cdot,\cdot\right>_f$, and we use the notation $\left<\cdot,\cdot\right>_\varGamma$ to denote the inner product of $L^2(\varGamma)$: \begin{alignat*}{3}
(f,g)_{\mathcal D}&:=\int_{\Omega^{\mathcal D}}fg~\mathrm d\mathbf x, &&\qquad f,g\in L^2(\Omega^{\mathcal D}) \\
\left<f,g\right>_{t,\mathcal D}&:=\int_{\varGamma_{t}^{\mathcal D}}fg~\mathrm ds,&&\qquad f,g\in L^2(\varGamma_{t}^{\mathcal D}), \\
\left<f,g\right>_f&:=\int_{\varGamma_{f}^{P}}fg~\mathrm ds,&&\qquad f,g\in L^2(\varGamma_{f}^{P}), \\
\left<f,g\right>_\varGamma&:=\int_{\varGamma}fg~\mathrm ds,&&\qquad f,g\in L^2(\varGamma).
\end{alignat*}
For the following description of the variational forms, let $\mathbf u_0\in\boldsymbol{H}^1(\Omega),\ \mathbf f_{P}\in\boldsymbol{L}^2(\Omega^P),\ \mathbf f_{E}\in\boldsymbol{L}^2(\Omega^E),\ \mathbf t_P\in\boldsymbol{L}^2(\varGamma_t^P),\ \mathbf{t}^E\in\boldsymbol{L}^2(\varGamma_t^E),\ p_0\in L^2(\Omega^P),\ h\in L^2(\Omega^P),\ z_w\in L^2(\varGamma_f^P) $ and $c_0$ is assumed to be positive in $\Omega^P$. 

\begin{problem}
    \label{def:vf-origin} Find $\mathbf u,p$ with $\mathbf u\in L^\infty(0,T;\boldsymbol{X})$ and $p\in L^\infty(0,T;L^2(\Omega^P))\cap L^2(0,T;H^1(\Omega^P))$ such that for $t\in [0,T]$ a.e.:
    \begin{alignat}{2}
        (\boldsymbol{\sigma}_P(\mathbf u_P),\boldsymbol{\varepsilon}(\mathbf v_P))_P&+(\boldsymbol{\sigma}_E(\mathbf u_E),\boldsymbol{\varepsilon}(\mathbf v_E))_E-\alpha(p,\div\mathbf v_P)_P+I_\varGamma \nonumber  \\
        &=(\mathbf f_{P},\mathbf v_P)_P+(\mathbf f_{P},\mathbf v_E)_E+\left<\mathbf t_P,\mathbf v_P\right>_{t,P}+\left<\mathbf t_E,\mathbf v_E\right>_{t,E} &&\qquad\forall\mathbf v_{\mathcal D}\in\boldsymbol{X}^{\mathcal D}, \\
        \left(\left( c_0p+\div\mathbf u_P \right)_t,\varphi\right)_P&+\frac{1}{\mu_f}\left(\boldsymbol{K}(\nabla p-\rho_f\mathbf g), \nabla\varphi\right)=(z,\varphi)_P+\left<z_w,\varphi\right>_f&&\qquad\forall\varphi\in W, \\
        &\mathbf u(0) = \mathbf u_0\ \text{in }\Omega,\quad p(0)=p_0\ \text{in }\Omega^P,
    \end{alignat}
    where the interface term $I_\varGamma$ takes the form: \[
        I_\varGamma=-\left<\hat{\boldsymbol{\sigma}}(\mathbf u_P,p)\cdot\mathbf n_{P\to E},\mathbf v_P\right>_\varGamma-\left<\boldsymbol{\sigma}_E(\mathbf u_E)\cdot\mathbf n_{E\to P},\mathbf v_E\right>_\varGamma.
    \]
\end{problem}
According to the normal stress continuity \eqref{TC2}, we define the Lagrange multiplier: \begin{equation}
    \boldsymbol{\lambda}=-\hat{\boldsymbol{\sigma}}(\mathbf{u}_P,p)\cdot\mathbf n_{P\to E}=\boldsymbol{\sigma}_E(\mathbf u_E)\cdot\mathbf n_{E\to P},
\end{equation}
then the interface term in \cref{def:vf-origin} can be written in \begin{equation}
    I_\varGamma=b_\varGamma(\mathbf v_P,\mathbf v_E;\boldsymbol{\lambda}),
\end{equation}
where \[
    b_\varGamma(\mathbf v_P,\mathbf v_E;\boldsymbol{\mu}) = \left<\mathbf v_P-\mathbf v_E,\boldsymbol{\mu}\right>_\varGamma.
\]
To guarantee the well-posedness of $b_\varGamma$, we require the following space\cite{mathew_domain_2008}: \[
    \boldsymbol{\lambda}\in\boldsymbol{\Lambda}:=\boldsymbol{H}^{-1/2}_{00}(\varGamma),
\]which is the dual space of $\boldsymbol{H}^{1/2}_{00}(\varGamma)$.

Next, we prescribe the variational formulation of the problem \eqref{eqn:momentum_balance_reform}-\eqref{TC3R}. For convenience, we introduce the following bilinear forms:
\begin{alignat}{2}
    \label{eqn:aD}a_\mathcal{D}(\mathbf u_\mathcal{D},\mathbf v_\mathcal{D})&=2\mu_\mathcal{D}(\boldsymbol{\varepsilon}(\mathbf u_\mathcal{D}),\boldsymbol{\varepsilon}(\mathbf v_\mathcal{D}))_\mathcal{D}, \\
    \label{eqn:bD}b_\mathcal{D}(\mathbf v_\mathcal{D}, \xi_\mathcal{D})&=-(\xi_\mathcal{D},\div\mathbf v_\mathcal{D})_\mathcal{D}, \\
    \label{eqn:af}a_f(p,q)&=\frac{1}{\mu_f}\left(\boldsymbol{K}\nabla p,\nabla q\right)_P.
\end{alignat}

\begin{problem}
    \label{def:vf-reformulated}
    Find $\mathbf u,\xi,\eta,p$ for $T>0$ with $\mathbf u\in L^\infty(0,T;\boldsymbol{X})$, $\xi\in L^2(0,T;L^2(\Omega))$, $\eta\in L^\infty(0,T;L^2(\Omega))\cap H^1(0,T;W')$ and $p\in L^\infty(0,T;L^2(\Omega))\cap L^2(0,T;W)$ for $t\in [0,T]$ such that
    \begin{alignat}{2}
        \label{eqn:vf1}&a_P(\mathbf u_P,\mathbf v_P)+b_P(\mathbf v_P,\xi_P)=(\mathbf f_{P},\mathbf v_P)_P+\left<\mathbf t_P,\mathbf v_P\right>_{t,P}+\left<\boldsymbol{\lambda},\mathbf v_P\right>_\varGamma &&\qquad\forall\mathbf v_P\in\boldsymbol{X}^P, \\
        \label{eqn:vf2}&b_P(\mathbf u_P,\zeta_P)-\kappa_3(\xi_P,\zeta_P)+\kappa_1(\eta,\zeta_P)_P=0&&\qquad\forall\zeta_P\in L^2(\Omega^P), \\
        \label{eqn:vf2-2}&\kappa_1(\xi_P,\psi)_P + \kappa_2(\eta, \psi)_P - (p, \psi)_P=0 &&\qquad\forall\psi\in L^2(\Omega^P), \\
        \label{eqn:vf3}&(\eta_t,q)+a_f(p,q)=(z, q)_P+\left<z_w, q\right>_f+\frac{\rho_f}{\mu_f}(\boldsymbol{K}\mathbf g,\nabla q)_P&&\qquad\forall q\in W, \\
        \label{eqn:vf4}&a_E(\mathbf u_E,\mathbf v_E)+b_E(\mathbf v_E,\xi_E)_E=(\mathbf f_{E},\mathbf v_E)_E+\left<\mathbf t_E,\mathbf v_E\right>_{t,E}-\left<\boldsymbol{\lambda},\mathbf v_E\right>_\varGamma&&\qquad\forall\mathbf v_E\in\boldsymbol{X}^E, \\
        \label{eqn:vf5}&b_E(\mathbf u_E,\zeta_E)-\lambda_E^{-1}(\xi_E,\zeta_E)_E =0&&\qquad\forall\zeta_E\in L^2(\Omega^E), \\
        \label{eqn:vf6}&\left<\mathbf u_P-\mathbf u_E, \boldsymbol{\mu}\right>_\varGamma=0 &&\qquad\forall\boldsymbol{\mu}\in\boldsymbol{\Lambda}, \\
        \label{eqn:vf7}&\mathbf u(0) = \mathbf u_{0},\quad p_h(0)=p_{0},\\
        \label{eqn:vf8}&\xi(0)=\xi_{0},\quad \eta(0)=\eta_{0}.
    \end{alignat}
\end{problem}

\subsection{Semi-discrete Scheme}

Let $\mathcal T_{h}^{\mathcal{D}} (I=P,E)$ be a suitable triangulation or rectangular partition for subdomain $\Omega^{\mathcal{D}}$, where $\overline{\Omega^{\mathcal{D}}}=\bigcup_{K\in\mathcal T_h^{\mathcal{D}}}\overline K$, and let $(\boldsymbol{X}_h^{\mathcal{D}}, M_h^{\mathcal{D}})$ be the space of suitable mixed finite element functions pair. 
We can naturally note the corresponding triangulation and finite element pair from the above definitions by $\mathcal T_h=\mathcal T_h^P\cup\mathcal T_h^E,\ \boldsymbol{X}_h=\boldsymbol{X}_h^P\oplus\boldsymbol{X}_h^E$ and $M_h=M_h^P\oplus M_h^E$. Note that finite element space $W_h$ for the fluid pressure $p$ and volumetric fluid content $\eta$ in $\Omega^P$ can be chosen independently, any choice that $W_h\supset M_h^P$ is acceptable. In this section we consider the convenient choice that $W_h=M_h^P$. In addition, let $\boldsymbol{\Lambda}_h$ denotes the finite element space of the Lagrange multiplier $\boldsymbol{\lambda}$. In this paper, we adopt  $\boldsymbol{P}_k$-$P_l$ element $(\boldsymbol{X}_h^{\mathcal{D}}, M_h^{\mathcal{D}})$ for $\mathbf u_{\mathcal{D}},\ \xi_{\mathcal{D}}$:
\[
    \begin{alignedat}{2}
        &\boldsymbol{X}_h^{\mathcal{D}}:=\left\{\mathbf v_h\in\boldsymbol{C}^0(\overline{\Omega^{\mathcal{D}}}):\ \left.\mathbf v_{\mathcal{D},h}\right|_K\in \boldsymbol{P}_k(K)\quad \forall K\in\mathcal T_h^{\mathcal{D}}\right\}, \\
        &M_h^{\mathcal{D}}:=\left\{\varphi_h\in C^0(\overline{\Omega^{\mathcal{D}}}):\ \left.\varphi_{\mathcal{D},h}\right|_K\in P_l(K)\quad \forall K\in\mathcal T_h^{\mathcal{D}}\right\}. 
    \end{alignedat}
\]
where $k, l$ will be specifically given in the test cases. In the latter description, we will always use ($\boldsymbol{P}_k$-$P_l$-$P_l$-$P_l$, $\boldsymbol{P}_k$-$P_l$) to denote that we use $\boldsymbol{P}_k$-$P_l$-$P_l$-$P_l$ finite element space for poroelasticity and $\boldsymbol{P}_k$-$P_l$ finite element space for elasticity. Now, we give the finite element discretized variational formulation in the following:
\begin{problem}
    \label{def:vf-reformulated-discretized}
    Find $\mathbf u_h,\xi_h,\eta_h,p_h$ for $T>0$ with $\mathbf u_h\in L^\infty(0,T;\boldsymbol{X}_h)$, $\xi_h\in L^\infty(0,T;M_h)$, $\eta_h\in L^\infty(0,T;M_h^P)\cap H^1(0,T;W_h')$ and $p_h\in L^\infty(0,T;M_h)\cap L^2(0,T;W_h)$ for $t\in [0,T]$ such that
    \begin{alignat}{3}
        \label{eqn:vf1h}&a_P(\mathbf u_{P,h},\mathbf v_{P,h})+b_P(\mathbf v_{P,h},\xi_{p,h})=(\mathbf f_{P},\mathbf v_{P,h})_P+\left<\mathbf t_P,\mathbf v_{P,h}\right>_{t,P}-\left<\boldsymbol{\lambda}_h,\mathbf v_{P,h}\right>_\varGamma &&\qquad\forall\mathbf v_{P,h}\in\boldsymbol{X}_h^P, \\
        \label{eqn:vf2h}&b(\mathbf u_{P,h},\zeta_{P,h})-\kappa_3(\xi_{P,h},\zeta_{P,h})_P+\kappa_1(\eta_h,\zeta_{P,h})_P=0&&\qquad\forall\zeta_{P,h}\in M_h^P, \\
        \label{eqn:vf2-2h}&\kappa_1(\xi_{P,h},\psi_h)_P+\kappa_2(\eta_h,\psi_h)_P-(p_h,\psi_h)=0&&\qquad\forall\psi_h\in W_h \\
        \label{eqn:vf3h}&(\eta_{h,t}, q_h)_P+a_f(p_h, q_h)=(z, q_h)_P+\left<z_w, q_h\right>_f+\frac{\rho_f}{\mu_f}(\boldsymbol{K}\mathbf g,\nabla q_h)_P&&\qquad\forall q_h\in W_h, \\
        \label{eqn:vf4h}&a_E(\mathbf u_{E,h},\mathbf v_{E,h})+b_E(\mathbf v_{E,h},\xi_{E,h})=(\mathbf f_{P},\mathbf v_{E,h})_E+\left<\mathbf t_E,\mathbf v_{E,h}\right>_{t,E}+\left<\boldsymbol{\lambda}_h,\mathbf v_{E,h}\right>_\varGamma&&\qquad\forall\mathbf v_{E,h}\in\boldsymbol{X}_h^E, \\
        \label{eqn:vf5h}&b_E(\mathbf u_{E,h},\zeta_{E,h})-\lambda_E^{-1}(\xi_{E,h},\zeta_{E,h})_E=0&&\qquad\forall\zeta_{E,h}\in M_h^E, \\
        \label{eqn:vf6h}&\left<\mathbf u_{P,h}-\mathbf u_{E,h}, \boldsymbol{\mu}_h\right>_\varGamma=0 &&\qquad\forall\boldsymbol{\mu}_h\in\boldsymbol{\Lambda}_h, \\
        \label{eqn:vf7h}&\mathbf u_h(0) = \mathbf u_{0,h},\quad p_h(0)=p_{0,h},\\
        \label{eqn:vf8h}&\xi_h(0)=\xi_{0,h},\quad \eta_h(0)=\eta_{0,h}.
    \end{alignat}
    where $\mathbf u_{0,h}, p_{0,h}, \xi_{0,h}$ and $\eta_{0,h}$ are the projections of the initial conditions respectively.
\end{problem}

\begin{remark}
    In practical actuality, we do not need to choose finite element spaces that the inf-sup condition holds, that is, the space $\boldsymbol{X}_h^{\mathcal D}$ and $M_h^{\mathcal D}$ with $k>l$, which can be demonstrated by the proof of existence and uniqueness of solutions of the following matrix-vector forms in Theorem \ref{thm:ex-uni}. 
\end{remark}

\subsection{Fully-discrete Scheme}
\label{sect:fully-discrete-scheme}

In this section, we will describe the fully-discrete scheme by the backward Euler method for time discretization. To do that, we assume that the time interval $[0,T]$ is divided uniformly into $N$ segments by setting $t_n=n\tau$, where $\tau=T/N$ is the time step size and $n=0,1,\cdots,N$, and let $\mathcal G^{n}:=\mathcal G(t_n)$, where $\mathcal G$ denotes any variable. 

We give the following fully discretized variational formulation in the following:

\begin{problem}
    \label{def:vf-reformulated-fully-discretized}
    Find $\mathbf u_h^n,\xi_h^n,\eta_h^n,p_h^n$ with $\mathbf u_h^n\in \boldsymbol{X}_h$, $\xi_h^n\in M_h$, $\eta_h^n\in W_h$ and $p_h^n\in W_h$ for $n=1,\cdots,N$ such that:
    \begin{alignat}{3}
        \label{eqn:vf1ht}&a_P(\mathbf u_{P,h}^n,\mathbf v_{P,h})+b_P(\mathbf v_{P,h},\xi_{p,h}^n)=(\mathbf f_{P}(t_n),\mathbf v_{P,h})_P+\left<\mathbf t_P(t_n),\mathbf v_{P,h}\right>_{t,P}-\left<\boldsymbol{\lambda}_h^n,\mathbf v_{P,h}\right>_\varGamma &&\qquad\forall\mathbf v_{P,h}\in\boldsymbol{X}_h^P, \\
        \label{eqn:vf2ht}&b(\mathbf u_{P,h}^n,\zeta_{P,h})-\kappa_3(\xi_{P,h}^n,\zeta_{P,h})_P+\kappa_1(\eta_h^n,\zeta_{P,h})_P=0&&\qquad\forall\zeta_{P,h}\in M_h^P, \\
        \label{eqn:vf2-2ht}&\kappa_1(\xi_{P,h}^n,\psi_h)_P+\kappa_2(\eta_h^n,\psi_h)_P-(p_h^n,\psi_h)=0&&\qquad\forall\psi_h\in W_h \\
        \label{eqn:vf3ht}&(\partial_t\eta_h^n, q_h)_P+a_f(p_h^n, q_h)=(z(t_n), q_h)_P+\left<z_w(t_n), q_h\right>_f+\frac{\rho_f}{\mu_f}(\boldsymbol{K}\mathbf g,\nabla q_h)_P&&\qquad\forall q_h\in W_h, \\
        \label{eqn:vf4ht}&a_E(\mathbf u_{E,h}^n,\mathbf v_{E,h})+b_E(\mathbf v_{E,h},\xi_{E,h}^n)=(\mathbf f_{P}(t_n),\mathbf v_{E,h})_E+\left<\mathbf t_E(t_n),\mathbf v_{E,h}\right>_{t,E}+\left<\boldsymbol{\lambda}_h^n,\mathbf v_{E,h}\right>_\varGamma&&\qquad\forall\mathbf v_{E,h}\in\boldsymbol{X}_h^E, \\
        \label{eqn:vf5ht}&b_E(\mathbf u_{E,h}^n,\zeta_{E,h})-\lambda_E^{-1}(\xi_{E,h}^n,\zeta_{E,h})_E=0&&\qquad\forall\zeta_{E,h}\in M_h^E, \\
        \label{eqn:vf6ht}&\left<\mathbf u_{P,h}^n-\mathbf u_{E,h}^n, \boldsymbol{\mu}_h\right>_\varGamma=0 &&\qquad\forall\boldsymbol{\mu}_h\in\boldsymbol{\Lambda}_h, \\
        \label{eqn:vf7ht}&\mathbf u_h^0 = \mathbf u_{0,h},\quad p_h^0=p_{0,h},\\
        \label{eqn:vf8ht}&\xi_h^0=\xi_{0,h},\quad \eta_h^0=\eta_{0,h}.
    \end{alignat}
    where the backward difference quotient $\partial_t\eta_h^n$ is defined as \begin{equation}
        \label{eqn:backward-difference}
        \partial_t\eta_h^n=\frac{\eta_h^n-\eta_h^{n-1}}{\tau},
    \end{equation}
    and where $\mathbf u_{0,h}, p_{0,h}, \xi_{0,h}$ and $\eta_{0,h}$ are the projections of the initial conditions respectively.
\end{problem}

Applying \eqref{eqn:backward-difference} to \eqref{eqn:vf3ht} yields the following: \begin{align}
        &-(\eta_h^n, q_h)_P-\tau a_f(p_h^n, q_h)=-(\eta_h^{n-1} + \tau z(t_n), q_h)_P-\tau\left<z_w(t_n), q_h\right>_f-\frac{\tau\rho_f}{\mu_f}(\boldsymbol{K}\mathbf g,\nabla q_h)_P, &&\forall q_h\in W_h. \label{eqn:vf3ht-reformulated}
\end{align}which is the finite element formulation of an elliptic equation. 

We now construct the linear systems at each time step. Let $\boldsymbol{X}_h^\mathcal{D}=\operatorname{span}\{\boldsymbol{\theta}_{\mathcal{D},i}\}_{i=1}^{N^\mathcal{D}_\mathbf{u}}$, $M_h^\mathcal{D}=\operatorname{span}\{\phi_{\mathcal D,i}\}_{i=1}^{N^\mathcal{D}_\xi}$, $W_h=\operatorname{span}\{\phi_i\}_{i=1}^{N_p}$ and $\boldsymbol{\Lambda}_h=\operatorname{span}\{\boldsymbol{\upsilon}_i\}_{i=1}^{N_{\boldsymbol{\lambda}}}$, where $N^\mathcal{D}_\mathbf{u}$, $N^\mathcal{D}_\xi$, $N_p$ and $N_{\boldsymbol{\lambda}}$ are the degrees of freedoms to $\boldsymbol{X}_h^\mathcal{D}$, $M_h^\mathcal{D}$, $W_h$ and $\boldsymbol{\Lambda}_h$ respectively. According the settings of finite element space for each variables, we write $\mathbf u_{\mathcal{D},h}(t)=\sum_{i=1}^{N_\mathbf{u}^\mathcal{D}}\mathbf u_{\mathcal{D},i}(t)\boldsymbol{\theta}_{\mathcal{D},i}$, $\xi_{\mathcal{D},h}(t)=\sum_{i=1}^{N_\xi^\mathcal{D}}\xi_{\mathcal{D},i}(t)\phi_{\mathcal{D},i}$, $\eta_h(t)=\sum_{i=1}^{N_p}\eta_i(t)\phi_{P,i}$, $p_h(t)=\sum_{i=1}^{N_p}p_i(t)\phi_{P,i}$ and $\boldsymbol{\lambda}_h=\sum_{i=1}^{N_{\boldsymbol{\lambda}}}\boldsymbol{\lambda}_i(t)\boldsymbol{\upsilon}_i$. The fully-discretized system deriving from \cref{def:vf-reformulated-fully-discretized} can be written in the following matrix-vector form: for each time step $n=1,2,\cdots,N$, find $\mathbf u_{\mathcal D,h}^n$, $\xi_{\mathcal D,h}^n$, $\eta_h^n$ and $p_h^n$ such that
\begin{equation}
    \label{eqn:linalg-system}
    \begin{bmatrix}
        \mathcal A_P    &    \mathcal B_P^T    &    \mathcal O    &    \mathcal O    &    \mathcal O    &    \mathcal O    &    \mathcal H_P^T    \\
        \mathcal B_P    &    -\kappa_3\mathcal R_P    &    \kappa_1\mathcal R_P    &    \mathcal O    &    \mathcal O    &    \mathcal O    &    \mathcal O    \\
        \mathcal O    &    \kappa_1\mathcal R_P    &    \kappa_2\mathcal R_P    &   -\mathcal R_P   &    \mathcal O    &    \mathcal O    &    \mathcal O    \\
        \mathcal O    &     \mathcal O    &       -\mathcal R_P&    -\tau\mathcal A_f    &    \mathcal O    &    \mathcal O    &    \mathcal O    \\
        \mathcal O    &    \mathcal O    &    \mathcal O    &    \mathcal O    &    \mathcal A_E    &    \mathcal B_E^T    &    -\mathcal H_E^T    \\
        \mathcal O    &     \mathcal O    &       \mathcal O&    \mathcal O    &       \mathcal B_E&    -\lambda_E^{-1}\mathcal R_E    &    \mathcal O    \\
        \mathcal H_P    &    \mathcal O    &    \mathcal O    &    \mathcal O    &       -\mathcal H_E &    \mathcal O    &    \mathcal O
    \end{bmatrix}\begin{bmatrix}
        \mathbf u_{P,h}^n \\
        \xi_{P,h}^n \\
        \eta^n \\
        p_h^n \\
        \mathbf u_{E,h}^n \\
        \xi_{E,h}^n \\
        \boldsymbol{\lambda}_h^n
    \end{bmatrix}=\begin{bmatrix}
        \mathcal F_P^n    \\
        \mathcal O      \\
        \mathcal O      \\
        -\mathcal R_P\eta_h^{n-1} - \tau\mathcal Z^n      \\
        \mathcal F_E^n    \\
        \mathcal O      \\
        \mathcal O
    \end{bmatrix},
\end{equation}
where the corresponding representations of the above block matrices and vectors are
\begin{align*}
    \left[\mathcal A_P\right]_{ij}&=a_P(\boldsymbol{\theta}_{P,i},\boldsymbol{\theta}_{P,j}), && \left[\mathcal A_E\right]_{ij}=a_E(\boldsymbol{\theta}_{E,i},\boldsymbol{\theta}_{E,j}), \\
    \left[\mathcal B_P\right]_{ij}&=b_P(\boldsymbol{\theta}_{P,j},\phi_{P,i}), && \left[\mathcal B_E\right]_{ij}=b_E(\boldsymbol{\theta}_{E,j},\phi_{E,i}), \\
    \left[\mathcal H_P\right]_{ij}&=\left<\boldsymbol{\theta}_{P,j},\boldsymbol{\upsilon}_i\right>_\varGamma, && \left[\mathcal H_E\right]_{ij}=\left<\boldsymbol{\theta}_{E,j},\boldsymbol{\upsilon}_i\right>_\varGamma, \\
    \left[\mathcal R_P\right]_{ij}&=(\phi_{P,i},\phi_{P,j}), && \left[\mathcal R_E\right]_{ij}=(\phi_{E,i},\phi_{E,j}), \\
    \left[\mathcal F_P\right]_{i}&=(\mathbf f_P(t_n),\boldsymbol{\theta}_{P,i}) + \left<\mathbf t_P(t_n),\boldsymbol{\theta}_{P,i}\right>_{t,P}, && \left[\mathcal F_E\right]_{i}=(f_E(t_n),\boldsymbol{\theta}_{E,i})+\left<\mathbf t_E(t_n),\boldsymbol{\theta}_{E,i}\right>,
\end{align*}
\begin{equation*}
    \left[\mathcal Z^n\right]_{i}=(z(t_n),\phi_{P,i})_P+\left<z_w(t_n),\phi_{P,i}\right>_f+\frac{\rho_f}{\mu_f}(\boldsymbol{K}\mathbf g,\nabla\phi_{P,i})_P.
\end{equation*}

\begin{remark}
    \label{rem:inf-sup}
    The fully-discretized scheme described in \cref{def:vf-reformulated-fully-discretized} is not optimal for parallel solving. Assume that the degree of freedoms for $\mathcal T_P$ and $\mathcal T_E$ are the same, since the reformulated poroelasticity subproblem \eqref{eqn:vf1ht}-\eqref{eqn:vf3ht} has extra two variables $\eta$, $p$ compared to the nearly-incompressible elasticity subproblem \eqref{eqn:vf4ht}-\eqref{eqn:vf5ht}, the computational cost for solving the poroelasticity subproblem is much higher than that of the elasticity subproblem, which leads to load imbalance in parallel computing. To overcome this issue, we need to adopt the explicit time discretization for the equation of $\eta$, $p$ in \eqref{eqn:vf3h} in order to decouple the momentum balance equations \eqref{eqn:vf1h}-\eqref{eqn:vf2h} and the mass balance equation \eqref{eqn:vf2-2h}-\eqref{eqn:vf3h}. However, classical linear explicit schemes may lead to too strict stability condition, which forces us to choose an unacceptably small time step size. To this end, we can consider the exponential integrator\cite{hochbruck2010exponential} for a accuracy-preserving decoupling.
\end{remark}

\section{The FETI Method}
\label{sect:algorithms}

In this section we consider implementing the FETI method to the coupled poroelasticity and elasticity problem. The target of discussing the FETI algorithm is to find the displacement on the interface $\left.\mathbf u\right|_\varGamma$ such that the poroelasticity and elasticity subproblems in their corresponding subdomains can be solved independently as generalized single-region problems, which has the potential for parallelization. To see this, we first reduce the saddle point system \eqref{eqn:linalg-system} to a smaller one for solving interface displacement $\left.\mathbf u_h^n\right|_\varGamma$ and Lagrange multiplier $\boldsymbol{\lambda}_h^n$ only. For the reduced system, since the difficulty for explicitly showing the stiffness matrix, we consider a preconditioned conjugate gradient (PCG) method with the FETI Dirichlet preconditioner(cf. \cite{mathew_domain_2008}).

\subsection{The FETI Algorithm and Preconditioner} Before starting the description, we first reorder some variables so that each subproblem has the form of saddle-point system. We set\label{sect:the_feti_algorithm_and_preconditioner}\begin{align*}
    \mathcal{U}_{P,h}(t)=\begin{bmatrix}
        \mathbf u_{P,h}(t)	\\	\eta_h(t)
    \end{bmatrix},\quad \mathcal{U}_{E,h}(t)=
        \mathbf u_{E,h}(t),
    \quad \mathcal{P}_{P,h}(t)=\begin{bmatrix}
        \xi_{P,h}(t)	\\	p_h(t)
    \end{bmatrix},	\quad \mathcal{P}_{E,h}(t)=
        \xi_{E,h}(t),
\end{align*}, and then we rewrite \eqref{eqn:linalg-system} in the following form:
\begin{equation}\label{eqn:linalg-system-reformulated}
    \mathcal K\begin{bmatrix}
        \mathcal U_{P,h}^{n}    & \mathcal P_{P,h}^{n}    &    \mathcal U_{E,h}^{n}    & \mathcal P_{E,h}^{n}    &    \boldsymbol{\lambda}_h^n
    \end{bmatrix}^T=\begin{bmatrix}
        \mathcal F_{P,*}^n    &
        \mathcal Z_{*}^n    &
        \mathcal F_{E,*}^n    &
        \mathcal O      &
        \mathcal O
    \end{bmatrix}^T,
\end{equation}
where \[
    \mathcal K=\begin{bmatrix}
        \mathcal A_P^*    &    \mathcal (B_P^*)^T    &    \mathcal O    &    \mathcal O    &    \mathcal (H_P^*)^T    \\
        \mathcal B_P^*    &    -\mathcal C_P^*    &    \mathcal O    &    \mathcal O    &    \mathcal O    \\
            \mathcal O    &    \mathcal O    &    \mathcal A_E    &    \mathcal B_E^T    &    \mathcal (H_E^*)^T    \\
            \mathcal O    &     \mathcal O    &       \mathcal B_E&    -\lambda_E^{-1}\mathcal R_E    &    \mathcal O    \\
            \mathcal H_P^*    &    \mathcal O    &       \mathcal H_E^* &    \mathcal O    &    \mathcal O
    \end{bmatrix}, \mathcal F_{P,*}^n=\begin{bmatrix}
        \mathcal F_P^n    \\
        \mathcal O
    \end{bmatrix},\quad \mathcal Z_{*}^n=\begin{bmatrix}
        \mathcal O    \\
        -\mathcal R_P\eta_h^{n-1} - \tau\mathcal Z^n
    \end{bmatrix}, \mathcal F_{E,*}^n=\begin{bmatrix}
        \mathcal F_E^n    \\
        \mathcal O
    \end{bmatrix},
\] and where \[
    \mathcal A_P^*=\begin{bmatrix}
        \mathcal A_P    &    \mathcal O    \\
        \mathcal O    &    \kappa_2\mathcal R_P
    \end{bmatrix},\quad \mathcal B_P^*=\begin{bmatrix}
        \mathcal B_P    &    \kappa_1\mathcal R_P    \\
        \mathcal O    &    -\mathcal R_P
    \end{bmatrix},\quad \mathcal C_P^*=\begin{bmatrix}
        \kappa_3\mathcal R_P    &    \mathcal O    \\
        \mathcal O    &    \mathcal A_f
    \end{bmatrix},\quad \mathcal H_P^*=\begin{bmatrix}
        \mathcal H_P    &    \mathcal O
    \end{bmatrix}.
\]

We first reorder the vector of degrees of freedom $\mathcal U_{\mathcal D,h}^n$ as $\left(\mathcal U_{\mathcal D,h,I}^n,\ \mathcal U_{\mathcal D,h,B}^n\right)^T$ and corresponding load vectors as $\mathcal F_{\mathcal D}^n=\left(\mathcal F_{\mathcal D,I}^n,\ \mathcal F_{\mathcal D,B}^n\right)^T$ , which yields from \eqref{eqn:linalg-system-reformulated} the following reordered system: \begin{equation}\label{eqn:reordered-system}
    \begin{bmatrix}
        \mathcal A_{P,II}^*    &    (\mathcal B_{P,I}^*)^T    &    \mathcal (A_{P,IB}^*)^T    &    \mathcal O    &    \mathcal O    &      \mathcal O    &    \mathcal O   \\
        \mathcal B_{P,I}^*    &    -\mathcal C_P^*    &    \mathcal (B_{P,B}^*)^T    &    \mathcal O    &    \mathcal O    &     \mathcal O    &    \mathcal O    \\
        \mathcal A_{P,IB}^*    &    \mathcal B_{P,B}^*    &    \mathcal A_{P,BB}^*    &    \mathcal O    &    \mathcal O    &     \mathcal O    &    \mathcal H_{P}^T    \\
            \mathcal O    &    \mathcal O    &    \mathcal O    &    \mathcal A_{E,II}    &    \mathcal B_{E,I}^T    &    \mathcal A_{E,IB}^T    &    \mathcal O    \\
            \mathcal O       &    \mathcal O     &     \mathcal O    &       \mathcal B_{E,I}&    -\mathcal C_E    &    \mathcal B_{E,B}^T    &    \mathcal O    \\
            \mathcal O    &    \mathcal O    &    \mathcal O    &    \mathcal A_{E,IB}    &    \mathcal B_{E,B}    &    \mathcal A_{E,BB}     &        \mathcal H_{E}^T  \\
            \mathcal O    &    \mathcal O    &    \mathcal H_{P}    &    \mathcal O    &       \mathcal O    &       \mathcal H_{E} &    \mathcal O
    \end{bmatrix}\begin{bmatrix}
        \mathcal U_{P,h,I}^n    \\
        \mathcal P_{P,h}^n    \\
        \mathcal U_{P,h,B}^n    \\
        \mathcal U_{E,h,I}^n    \\
        \mathcal P_{E,h}^n    \\
        \mathcal U_{E,h,B}^n    \\
        \boldsymbol{\lambda}_h^n
    \end{bmatrix}=\begin{bmatrix}
        \mathcal F_{P,I}^n    \\
        \mathcal Z^n_*    \\
        \mathcal F_{P,B}^n    \\
        \mathcal F_{E,I}^n    \\
        \mathcal O    \\
        \mathcal F_{P,B}^n    \\
        \mathcal O
    \end{bmatrix}.
\end{equation}
Substituting this expression into the second block row of \eqref{eqn:reordered-system} yields a reduced saddle point system for determining $\mathcal U_{\mathcal D,h,B}^n$, that is, the reduced saddle point system: \begin{equation}\label{eqn:reduced-system}
    \begin{bmatrix}
        \mathcal S_P    &    \mathcal O    &    \mathcal H_{P}^T    \\
        \mathcal O    &    \mathcal S_E    &    \mathcal H_{E}^T    \\
        \mathcal H_{P}    &    \mathcal H_{E}    &\mathcal O
    \end{bmatrix}\begin{bmatrix}
        \mathcal U_{P,h,B}^n    \\
        \mathcal U_{E,h,B}^n    \\
        \boldsymbol{\lambda}_h^n
    \end{bmatrix} = \begin{bmatrix}
       \tilde{\mathcal F}_{P,B}^n    \\
        \tilde{\mathcal F}_{E,B}^n    \\
        \mathcal O
    \end{bmatrix},
\end{equation}
where \[
\begin{aligned}
    \mathcal S_{P}&=\mathcal A_{P,BB}^*-\begin{bmatrix}
        \mathcal A_{P,IB}^*    &    \mathcal B_{P,B}^*
    \end{bmatrix}\begin{bmatrix}
        \mathcal A_{P,II}^*    &    (\mathcal B_{P,I}^*)^T    \\
        \mathcal B_{P,I}^*    &    -\mathcal C_P^*
    \end{bmatrix}^{-1}\begin{bmatrix}
        (\mathcal A_{P,IB}^*)^T    \\    (\mathcal B_{P,B}^*)^T
    \end{bmatrix}, \\
    \mathcal S_{E}&=\mathcal A_{E,BB}-\begin{bmatrix}
        \mathcal A_{E,IB}    &    \mathcal B_{E,B}
    \end{bmatrix}\begin{bmatrix}
        \mathcal A_{E,II}    &    \mathcal B_{E,I}^T    \\
        \mathcal B_{E,I}    &    -\mathcal C_E
    \end{bmatrix}^{-1}\begin{bmatrix}
        \mathcal A_{E,IB}^T    \\    \mathcal B_{E,B}^T
    \end{bmatrix},
    \end{aligned}
\] and \[
    \begin{aligned}
    \tilde{\mathcal F}_{P,B}^n&=\tilde{\mathcal F}_{P,B}^n - \mathcal A_{P,BB}-\begin{bmatrix}
        \mathcal A_{P,IB}    &    \mathcal B_{P,B}
    \end{bmatrix}\begin{bmatrix}
        \mathcal F_{P,I}^n    \\
        \mathcal \mathcal Z^n_*
    \end{bmatrix},    \\
    \tilde{\mathcal F}_{E,B}^n&=\tilde{\mathcal F}_{E,B}^n - \mathcal A_{E,BB}-\begin{bmatrix}
        \mathcal A_{E,IB}    &    \mathcal B_{E,B}
    \end{bmatrix}\begin{bmatrix}
        \mathcal F_{E,I}^n    \\
        \mathcal O
    \end{bmatrix}.
    \end{aligned}
\]

We now describe the existence and uniqueness of solutions of \eqref{eqn:reordered-system} in the following.

\begin{theorem}\label{thm:ex-uni}
    The matrix-vector form \eqref{eqn:linalg-system}, reordered into \eqref{eqn:reordered-system}, is uniquely solvable.
\end{theorem}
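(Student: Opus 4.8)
The plan is to use the fact that \eqref{eqn:linalg-system} (equivalently \eqref{eqn:reordered-system}) is a \emph{square} linear system, so unique solvability is equivalent to injectivity of the coefficient matrix; thus it suffices to show that the homogeneous problem, obtained by replacing every right-hand side by $\mathcal O$, admits only the trivial solution. Writing a homogeneous solution as $(\mathbf u_{P},\xi_{P},\eta,p,\mathbf u_{E},\xi_{E},\boldsymbol\lambda)$ (suppressing the mesh and time indices), I would test each block row against its own unknown and assemble the tested rows into a single energy identity in which all interface and cross-coupling terms telescope away.

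First I would pair row one with $\mathbf u_P$ and row five with $\mathbf u_E$ and add them; the interface contributions $\mathbf u_P^T\mathcal H_P^T\boldsymbol\lambda$ and $-\mathbf u_E^T\mathcal H_E^T\boldsymbol\lambda$ cancel exactly against the constraint row seven tested with $\boldsymbol\lambda$, namely $\mathcal H_P\mathbf u_P-\mathcal H_E\mathbf u_E=\mathcal O$. This leaves $\mathbf u_P^T\mathcal A_P\mathbf u_P+\mathbf u_E^T\mathcal A_E\mathbf u_E+\mathbf u_P^T\mathcal B_P^T\xi_P+\mathbf u_E^T\mathcal B_E^T\xi_E=0$. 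Next I would eliminate the mixed $\mathcal B$-terms: row two tested with $\xi_P$ gives $\mathbf u_P^T\mathcal B_P^T\xi_P=\kappa_3\xi_P^T\mathcal R_P\xi_P-\kappa_1\xi_P^T\mathcal R_P\eta$, and row six tested with $\xi_E$ gives $\mathbf u_E^T\mathcal B_E^T\xi_E=\lambda_E^{-1}\xi_E^T\mathcal R_E\xi_E$. The surviving cross term $-\kappa_1\xi_P^T\mathcal R_P\eta$ is then resolved through rows three and four: row three tested with $\eta$ rewrites $\kappa_1\xi_P^T\mathcal R_P\eta=\eta^T\mathcal R_P p-\kappa_2\eta^T\mathcal R_P\eta$, while row four tested with $p$ yields $\eta^T\mathcal R_P p=-\tau\,p^T\mathcal A_f p$.

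Substituting everything back produces the clean identity
\[
\mathbf u_P^T\mathcal A_P\mathbf u_P+\mathbf u_E^T\mathcal A_E\mathbf u_E+\kappa_3\,\xi_P^T\mathcal R_P\xi_P+\kappa_2\,\eta^T\mathcal R_P\eta+\lambda_E^{-1}\,\xi_E^T\mathcal R_E\xi_E+\tau\,p^T\mathcal A_f p=0.
\]
Every summand is nonnegative: $\mathcal A_P,\mathcal A_E$ are coercive on $\boldsymbol X_h^P,\boldsymbol X_h^E$ by Korn's inequality together with the homogeneous Dirichlet data on $\varGamma_d^{\mathcal D}$; the mass matrices $\mathcal R_P,\mathcal R_E$ are positive definite; $\mathcal A_f$ is positive semidefinite and, since $W_h\subset H^1_{0p}(\Omega^P)$ with $\varGamma_p^P$ of positive measure, Poincar\'e--Friedrichs renders $p\mapsto p^T\mathcal A_f p$ positive definite; and $\kappa_2,\kappa_3,\lambda_E^{-1},\tau>0$. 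Hence each term vanishes, forcing $\mathbf u_P=\mathbf u_E=\mathbf 0$, $\xi_P=\xi_E=0$, $\eta=0$, $p=0$. This is exactly the phenomenon noted in \cref{rem:inf-sup}: the regularizing mass blocks $-\kappa_3\mathcal R_P$ and $-\lambda_E^{-1}\mathcal R_E$ supply coercivity in the $\xi$-variables, so no discrete inf-sup between $\boldsymbol X_h^{\mathcal D}$ and $M_h^{\mathcal D}$ is needed.

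It remains to deduce $\boldsymbol\lambda=\mathbf 0$, and I expect this to be the only genuinely delicate step. With $\mathbf u_P=\mathbf 0$ and $\xi_P=0$ already in hand, row one collapses to $\mathcal H_P^T\boldsymbol\lambda=\mathcal O$, i.e. $\langle\mathbf v_{P,h},\boldsymbol\lambda_h\rangle_\varGamma=0$ for all $\mathbf v_{P,h}\in\boldsymbol X_h^P$ (equivalently, row five gives $\mathcal H_E^T\boldsymbol\lambda=\mathcal O$). Concluding $\boldsymbol\lambda=\mathbf 0$ therefore requires injectivity of $\mathcal H_P^T$ (or $\mathcal H_E^T$), that is, a discrete inf-sup condition linking the multiplier space $\boldsymbol\Lambda_h$ to the interface trace of $\boldsymbol X_h^P$, of the form $\inf_{\boldsymbol\mu_h\in\boldsymbol\Lambda_h}\sup_{\mathbf v_{P,h}\in\boldsymbol X_h^P}\langle\mathbf v_{P,h},\boldsymbol\mu_h\rangle_\varGamma/(\|\mathbf v_{P,h}\|\,\|\boldsymbol\mu_h\|)\ge\beta>0$. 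I would close the proof by invoking the chosen multiplier discretization---taking $\boldsymbol\Lambda_h$ compatible with (for instance a subspace of) the interface trace space so that $\mathcal H_P$ has full row rank, as is standard in the FETI framework; this is precisely the one compatibility hypothesis on $\boldsymbol\Lambda_h$ that cannot be dispensed with, in contrast to the $\xi$-inf-sup.
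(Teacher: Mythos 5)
Your proof is correct, but it takes a genuinely different route from the paper. You argue directly on the kernel of the full square system \eqref{eqn:linalg-system}: testing each homogeneous block row against its own unknown, cancelling the interface terms via the constraint row, and arriving at the energy identity in which $\mathcal A_P,\mathcal A_E$ (Korn plus Dirichlet data), the mass matrices $\mathcal R_P,\mathcal R_E$, and the positive weights $\kappa_2,\kappa_3,\lambda_E^{-1},\tau$ force all primal unknowns to vanish; I verified the telescoping of the cross terms through rows two, three, four and six, and it is exact. The paper instead proceeds structurally: it reduces \eqref{eqn:reordered-system} by block elimination to the interface saddle system \eqref{eqn:reduced-system}, proves that the Schur complements $\mathcal S_P,\mathcal S_E$ are symmetric positive definite via Sylvester's law of inertia, invokes the abstract saddle-point theory of Boffi--Brezzi--Fortin (their Corollary 3.2.1, under the stated surjectivity of $\left[\mathcal H_P,\ \mathcal H_E\right]$) for the reduced system, and then back-substitutes for the interior unknowns. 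What your argument buys is self-containedness and transparency: it avoids Schur complements and inertia arguments entirely, and it makes explicit that the regularizing blocks $-\kappa_3\mathcal R_P$ and $-\lambda_E^{-1}\mathcal R_E$ supply the coercivity that dispenses with the $\boldsymbol{X}_h^{\mathcal D}$--$M_h^{\mathcal D}$ inf-sup condition, which is exactly the content of the paper's remark following \cref{thm:ex-uni}. What the paper's route buys is that the positive definiteness of $\mathcal S_P$ and $\mathcal S_E$ is needed again later --- it underlies the symmetric positive definiteness of the FETI operator $\mathcal K_S$ in \eqref{eqn:feti-system} and hence the applicability of PCG --- so the structural proof does double duty, whereas your kernel argument would leave that to be established separately. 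One refinement: since your homogeneous rows one and five yield \emph{both} $\mathcal H_P^T\boldsymbol\lambda=\mathcal O$ and $\mathcal H_E^T\boldsymbol\lambda=\mathcal O$, it suffices that the \emph{stacked} map $\boldsymbol\mu\mapsto(\mathcal H_P^T\boldsymbol\mu,\mathcal H_E^T\boldsymbol\mu)$ be injective, which is precisely the surjectivity of $\left[\mathcal H_P,\ \mathcal H_E\right]$ assumed in the paper; requiring injectivity of $\mathcal H_P^T$ alone, as you propose, is sufficient but marginally stronger than necessary. In both proofs this trace-compatibility of $\boldsymbol\Lambda_h$ is the single hypothesis taken on faith, so your identification of it as the irreducible assumption is accurate.
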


\begin{proof}
        In actuality, according to the reduction from \eqref{eqn:reordered-system} to \eqref{eqn:reduced-system}, the stiffness matrix in \eqref{eqn:reordered-system} is transformed into the following diagonal form: \[
    \operatorname{diag}\left\{
        \begin{bmatrix}
        \mathcal A_{P,II}^*    &    (\mathcal B_{P,I}^T)^*    \\
        \mathcal B_{P,I}^*    &    -\mathcal C_P^*
    \end{bmatrix},
    \begin{bmatrix}
        \mathcal A_{E,II}    &    \mathcal B_{E,I}^T    \\
        \mathcal B_{E,I}    &    -\mathcal C_E
    \end{bmatrix},\begin{bmatrix}
        \mathcal S_P    &    \mathcal O    &    \mathcal H_{P}^T    \\
        \mathcal O    &    \mathcal S_E    &    \mathcal H_{E}^T    \\
        \mathcal H_{P}    &    \mathcal H_{E}    &\mathcal O
    \end{bmatrix}
    \right\}.
\] We need only to prove respectively that the above three diagonal blocks are all non-singular and then the correspondingly subequations are all uniquely solvable.

        First we prove that the Schur complement $\mathcal S_P$, $\mathcal S_E$ are both symmetric and positive definite. From the Sylvester's law of inertia, the number of positive, zero and negative eigenvalues do not change through elementary transformations, thus we can verify that the Schur complement $\mathcal S_P$ and $\mathcal S_E$ are symmetric and positive definite. 

        Then we need only to prove that the reduced system \eqref{eqn:reduced-system}, which is a standard saddle point system, is uniquely solvable. Since the block diagonal $\operatorname{diag}\left(\mathcal S_P,\mathcal S_E\right)$ is symmetric and positive definite and $\left[\mathcal H_P,\mathcal H_E\right]$ is surjective, the reduced system \eqref{eqn:reduced-system} is uniquely solvable according to Corollary 3.2.1 in \cite{boffi_mixed_2013}.

        Once we have solved the interface displacement $\mathcal U_{\mathcal D,h,B}$ and Lagrange multiplier $\boldsymbol{\lambda}_h^n$, the solution of \eqref{eqn:linalg-system} can be obtained by solving \eqref{eqn:reduced-system} for $\mathcal U_{P,h,B}^n,\ \mathcal U_{E,h,B}^n,\ \boldsymbol{\lambda}_h^n$ and then consequently solve \[\begin{aligned}
        \begin{bmatrix}
             \mathcal A_{P,II}^*   &    (\mathcal B_{P,I}^*)^T    \\
             \mathcal B_{P,I}^*    &    \mathcal C_P^*
        \end{bmatrix}\begin{bmatrix}
            \mathcal U_{P,h,I}^n    \\
            \mathcal P_{P,h}^n    \\
        \end{bmatrix}&=\begin{bmatrix}
            \mathcal F_{P,I}^n    \\
            \mathcal I_\tau\mathcal Z^n
        \end{bmatrix} - \begin{bmatrix}
            (\mathcal A_{P,IB}^*)^T    \\    (\mathcal B_{P,B}^*)^T
        \end{bmatrix}\mathcal U_{P,h,B}^n  - \mathcal H_{P,B}^T\boldsymbol{\lambda}_h^n, \\
        \begin{bmatrix}
             \mathcal A_{E,II}    &    \mathcal B_{E,I}^T    \\
             \mathcal B_{E,I}    &    -\mathcal C_E
        \end{bmatrix}\begin{bmatrix}
            \mathcal U_{E,h,I}^n    \\
            \mathcal P_{E,h}^n    \\
        \end{bmatrix}&=\begin{bmatrix}
            \mathcal F_{E,I}^n    \\
            \mathcal O
        \end{bmatrix} - \begin{bmatrix}
            \mathcal A_{E,IB}^T    \\    \mathcal B_{E,B}^T
        \end{bmatrix}\mathcal U_{E,h,B}^n  - \mathcal H_{E,B}^T\boldsymbol{\lambda}_h^n
    \end{aligned}\] for $\mathcal U_{P,h,I}^n$ and $\mathcal U_{E,h,I}^n$. The existence and uniqueness of solutions of the above two subsystems can be easily proved by Proposition 3.3.1 in \cite{boffi_mixed_2013}.
    \end{proof}

\begin{remark}
    The proof of Theorem \ref{thm:ex-uni} shows that $\mathcal B_{\mathcal D,I}$ need not be surjective. Corresponding to the choices of finite element spaces, there is no need to satisfy the inf-sup condition, which means that the lowest-order finite element spaces can be chosen without vanishing the existence and uniqueness of solutions.
\end{remark}

By eliminating $\mathcal H_{P,B}$, $\mathcal H_{E,B}$ in the third block row, we can obtain the Schur complement system of \eqref{eqn:reduced-system}: \begin{equation}\label{eqn:feti-system}
    \mathcal K_S\boldsymbol{\lambda}_h^n=\mathcal F_S^n,
\end{equation}
where\[\begin{aligned}
\mathcal K_S&=\mathcal H_{P,B}\mathcal S_P^{-1}\mathcal H_{P,B}^T + \mathcal H_{E,B}\mathcal S_E^{-1}\mathcal H_{E,B}^T \\
\mathcal F_S^n&=\mathcal H_{P,B}\mathcal S_P^{-1}\mathcal H_{P,B}^T\tilde{\mathcal F}_{P,B} + \mathcal H_{E,B}\mathcal S_E^{-1}\tilde{\mathcal F}^n_{E,B}.
\end{aligned}\]
Since $\mathcal K_S,\ \mathcal F_S$ can neither be expressed explicitly, we consider a PCG method for solving \eqref{eqn:feti-system}. In this paper, we consider the FETI Dirichlet preconditioner: \begin{equation}\label{eqn:feti_dirichlet_preconditioner}
    \mathcal M^{-1}_S := \mathcal H_{P,B}\mathcal S_P\mathcal H_{P,B}^T + \mathcal H_{E,B}\mathcal S_E\mathcal H_{E,B}^T.
\end{equation}

The FETI system \eqref{eqn:feti-system} is considered to be solved using PCG method with left conditioner $\mathcal M_S$ (see Algorithm \ref{alg:PCG-FETI}). However, since the matrix $\mathcal K_S$ is too complicated to be explicitly presented, that is, calculating $\mathcal K_s\mathbf p_k$ for each $k$ in Algorithm \ref{alg:PCG-FETI} involves solving a Schur complement system with coefficient matrices $\mathcal S_\mathcal{D}$, which needs implementing PCG method in each PCG step resulting in a multiplicative superposition of PCG iterations.

\begin{algorithm}[!htbp]
    \caption{PCG algorithm for solving \eqref{eqn:feti-system}}
    \label{alg:PCG-FETI}
    \begin{algorithmic}[1]
        \STATE Let $\boldsymbol{\lambda}_h^0=\mathbf 0$
        \FOR{$n=1,2,\cdots,N$}
        \STATE Let $\boldsymbol{\lambda}_h^{(n,0)}:=\boldsymbol{\lambda}_h^{n-1},\ \mathbf r_0:=\mathcal F_S^n-\mathcal K_S\boldsymbol{\lambda}_h^{(n,0)},\ \mathbf z_0:=\mathcal M^{-1}_S\mathbf r_0,\ \mathbf p_0:=\mathbf z_0$
        \FOR{$k=0,1,2,\cdots$ until convergence $\boldsymbol{\lambda}_{h}^{(n,k)}\to\boldsymbol{\lambda}_{h}^n$}
        \STATE $\alpha_{k}=\left(\mathbf r_{k},\mathbf z_{k}\right)/\left(\mathbf p_{k},\mathcal K_S\mathbf p_{k}\right)$
        \STATE $\boldsymbol{\lambda}_h^{(n,k+1)}:=\boldsymbol{\lambda}_h^{(n,k)}+\alpha_k\mathbf p_k$
        \STATE $\mathbf r_{k+1}=\mathbf r_k-\alpha_k\mathcal K_S\mathbf p_k$
        \STATE $\mathbf z_{k+1}=\mathcal M_S^{-1}\mathbf z_k$
        \STATE $\beta_k=\left(\mathbf r_{k+1},\mathbf z_{k+1}\right) / \left(\mathbf r_k,\mathbf z_k\right)$
        \STATE $\mathbf p_{k+1}=\mathbf z_{k+1}+\beta_k\mathbf p_k$
        \ENDFOR
        \ENDFOR
        \ENSURE $\boldsymbol{\lambda}_{h}^{n}\ (n=1,2,\cdots,N)$
    \end{algorithmic}
\end{algorithm}

This theoretical fact is far from satisfactory for computation. A generalized implementation proposed in \cref{sect:a_generailzed_implementation} will naturally avoid the multiplicative superposition of PCG iterations, that is, we do not distinguish between interior (including boundary) and interface points, and correspondingly we follow the idea for constructing the FETI Dirichlet preconditioner \eqref{eqn:feti_dirichlet_preconditioner} to defined an analogous preconditioner for solving the generalized system.

\subsection{A Generalized Implementation} \label{sect:a_generailzed_implementation}

As the last paragraph in \cref{sect:the_feti_algorithm_and_preconditioner} describes, to solve \eqref{eqn:feti-system} will unavoidably treat several Schur complement systems, where the PCG algorithm is needed. Here we consider not distinguishing interior and interface points, that is, the following Schur complement system: \begin{equation}\label{eqn:sim-feti}
    \mathcal K_A\boldsymbol{\lambda}_h^n = \mathcal{F}_A,
\end{equation}
where \[\begin{aligned}
\mathcal K_A&=\begin{bmatrix}
    \mathcal H_{P}^*    &    \mathcal O
\end{bmatrix}\begin{bmatrix}
    \mathcal A_P^*    &    (\mathcal B_P^*)^T \\
        \mathcal B_P^*    &    -\mathcal C_P^* 
\end{bmatrix}^{-1}\begin{bmatrix}
    \mathcal (H_{P}^*)^T    \\    \mathcal O
\end{bmatrix} + \begin{bmatrix}
    \mathcal H_{E}    &    \mathcal O
\end{bmatrix}\begin{bmatrix}
    \mathcal A_E    &    \mathcal B_E^T \\
        \mathcal B_E    &    -\mathcal C_E 
\end{bmatrix}^{-1}\begin{bmatrix}
    \mathcal H_{E}^T    \\    \mathcal O
\end{bmatrix}, \\
\mathcal F_A&=\begin{bmatrix}
    \mathcal H_{P}    &    \mathcal O
\end{bmatrix}\begin{bmatrix}
    \mathcal A_P^*    &    (\mathcal B_P^*)^T \\
        \mathcal B_P^*    &    -\mathcal C_P^* 
\end{bmatrix}^{-1}\begin{bmatrix}
    \mathcal F_{P,*}^n    \\
        \mathcal Z^n      
\end{bmatrix} + \begin{bmatrix}
    \mathcal H_{E}    &    \mathcal O
\end{bmatrix}\begin{bmatrix}
    \mathcal A_E    &    \mathcal B_E^T \\
        \mathcal B_E    &    -\mathcal C_E 
\end{bmatrix}^{-1}\begin{bmatrix}
    \mathcal F_E^n    \\
        \mathcal O
\end{bmatrix}.
\end{aligned}\]
It can be verified by Sylvester's law that $\mathcal K_A$ is also symmetric and positive definite. Analogously, we consider the following preconditioner: \begin{equation}
    \mathcal M^{-1}_A := \begin{bmatrix}
    \mathcal H_{P}^*    &    \mathcal O
\end{bmatrix}\begin{bmatrix}
    \mathcal A_P^*    &    (\mathcal B_P^*)^T \\
        \mathcal B_P^*    &    -\mathcal C_P^* 
\end{bmatrix}\begin{bmatrix}
    \mathcal (H_{P}^*)^T    \\    \mathcal O
\end{bmatrix} + \begin{bmatrix}
    \mathcal H_{E}    &    \mathcal O
\end{bmatrix}\begin{bmatrix}
    \mathcal A_E    &    \mathcal B_E^T \\
        \mathcal B_E    &    -\mathcal C_E 
\end{bmatrix}\begin{bmatrix}
    \mathcal H_{E}^T    \\    \mathcal O
\end{bmatrix}.
\end{equation}

Analogously, the generalized FETI system \eqref{eqn:sim-feti} is considered to be solved using PCG method with left conditioner $\mathcal M_A$ (see Algorithm \ref{alg:PCG-FETI-SIMPLE}). In actuality, $\mathcal M_A^{-1}$ can be written in \[
    \mathcal M_A^{-1}=\mathcal H_{P,B}\mathcal A_{P,BB}\mathcal H_{P,B}^T+\mathcal H_{E,B}\mathcal A_{E,BB}\mathcal H_{E,B}^T.
\]

Systems \eqref{eqn:sim-feti} has the same solution as \eqref{eqn:reordered-system}. The difference between $\mathcal M_A$ and $\mathcal M_S$ is that $\mathcal M_S$ has the Schur complement term \[
    \mathcal H_{P,B}\mathcal A_{P,IB}\mathcal A_{P,II}^{-1}\mathcal A_{P,IB}^T\mathcal H_{P,B}^T+\mathcal H_{E,B}\mathcal A_{E,IB}\mathcal A_{E,II}^{-1}\mathcal A_{E,IB}^T\mathcal H_{E,B}^T,
\] which implies that $\mathcal M_A$ has lower cost of computation resource, that is, if we consider solving each subproblem in each subdomain using Schur complement method, it is necessary to adopt PCG algorithm in each iteration. However, the double PCG iterations implies multiplicative maximum number of iterations, which means that the scale of stiffness equations must not be too large in order to reduce the computational costs. 

\begin{algorithm}[!htbp]
    \caption{PCG algorithm for solving \eqref{eqn:sim-feti}}
    \label{alg:PCG-FETI-SIMPLE}
    \begin{algorithmic}[1]
        \STATE Let $\boldsymbol{\lambda}_h^0=\mathbf 0$
        \FOR{$n=1,2,\cdots,N$}
        \STATE Let $\boldsymbol{\lambda}_h^{(n,0)}:=\boldsymbol{\lambda}_h^{n-1},\ \mathbf r_0:=\mathcal F_A^n-\mathcal K_A\boldsymbol{\lambda}_h^{(n,0)},\ \mathbf z_0:=\mathcal M^{-1}_A\mathbf r_0,\ \mathbf p_0:=\mathbf z_0$
        \FOR{$k=0,1,2,\cdots$ until convergence $\boldsymbol{\lambda}_{h}^{(n,k)}\to\boldsymbol{\lambda}_{h}^n$}
        \STATE $\alpha_{k}=\left(\mathbf r_{k},\mathbf z_{k}\right)/\left(\mathbf p_{k},\mathcal K_A\mathbf p_{k}\right)$
        \STATE $\boldsymbol{\lambda}_h^{(n,k+1)}:=\boldsymbol{\lambda}_h^{(n,k)}+\alpha_k\mathbf p_k$
        \STATE $\mathbf r_{k+1}=\mathbf r_k-\alpha_k\mathcal K_S\mathbf p_k$
        \STATE $\mathbf z_{k+1}=\mathcal M_A^{-1}\mathbf z_k$
        \STATE $\beta_k=\left(\mathbf r_{k+1},\mathbf z_{k+1}\right) / \left(\mathbf r_k,\mathbf z_k\right)$
        \STATE $\mathbf p_{k+1}=\mathbf z_{k+1}+\beta_k\mathbf p_k$
        \ENDFOR
        \ENDFOR
        \ENSURE $\boldsymbol{\lambda}_{h}^{n}\ (n=1,2,\cdots,N)$
    \end{algorithmic}
\end{algorithm}

\section{Numerical Results}
\label{sect:computational_resutls}

In this section, we consider several numerical tests cases for validating the computation efficiency of the above algorithms. First we adopt a model that has exact solutions in the poroelasticity subdomain, and from this the exact solutions in the elasticity domain can be manufactured. In this model we consider the two settings of finite element discretization: the ($\boldsymbol{P}_2$-$P_1$-$P_1$-$P_1$, $\boldsymbol{P}_2$-$P_1$) finite element space and the lowest-order ($\boldsymbol{P}_1$-$P_1$-$P_1$-$P_1$, $\boldsymbol{P}_1$-$P_1$) finite element space, where the convergence order and the parametric robustness to the Poisson's ratio. Then the Barry-Mercer's problem is focused as a benchmark test problem.

\subsection{Test Case 1: Convergence Order Validation}\label{ex:model_validation}
    Consider the unit square domain $\Omega=[0,1]\times [0,1]$, where $\Omega^P=[0,1]\times[0,1/2]$ and $\Omega^E=[0,1]\times[1/2,1]$. Time interval and step are set in $T=10^{-2}$ and $\tau=10^{-4}$. We set the exact solution of $\mathbf u$, $p$ in $\Omega^P$ in the following: \[
        \mathbf u_P=\begin{bmatrix}
            \sin(2\pi x)\sin(2\pi y) \\
            \sin(2\pi x)\sin(2\pi y)
        \end{bmatrix},\quad p=\sin(\pi x)\sin(\pi y).
    \]
    To guarantee that the transmission condition on $\varGamma$ holds, we consider the following manufactured solutions: \[
        \mathbf u_E=\mathbf u_P+\begin{bmatrix}
            0    \\
            -\frac{\alpha p(y-1/2)}{\lambda_P+2\mu_P}
        \end{bmatrix}.
    \]
    In this test case we set the ($\boldsymbol{P}_2$-$P_1$-$P_1$-$P_1$, $\boldsymbol{P}_2$-$P_1$) finite element space for the spatial discretization of the model. The parametric settings are listed in \cref{tab:para_in_ex1}, where the Poisson's ratio $\nu_{\mathcal D}$ are multiply set for validating that the model can overcome the ``locking" of both displacement and pressure. 

    \begin{table}[!htbp]
        \small
        \centering
        \caption{Parameters in \cref{ex:model_validation}}
        \label{tab:para_in_ex1}
        \begin{tabularx}{\textwidth}{XXX}
            \toprule[1pt]
            Parameter     &      Description \& Value    \\
            \midrule
            $E_\mathcal{D}$    &    Young's modulus    &    $E_P=E_E=10^4$    \\
            $\nu_\mathcal{D}$    &    Poisson's ratio    &    $\nu_P=\nu_E=0.2, 0.49, 0.499, 0.4999$    \\
            $\lambda_\mathcal{D},\ \mu_\mathcal{D}$    &    Lam\'e constants     &    $\lambda_\mathcal{D}=\frac{E_\mathcal{D}\nu_\mathcal{D}}{(1+\nu_{\mathcal{D}})(1-2\nu_{\mathcal D})},\ \mu_\mathcal{D}=\frac{E_\mathcal{D}}{1+\nu_{\mathcal{D}}}$   \\
            $\alpha$    &    Biot-Willis constant    &    1    \\
            $c_0$    &    Fluid specific storage coefficient    &    0.1    \\
            $\boldsymbol{K}$    &    Skeleton permeability tensor    &    1$\mathbf I_2$    \\
            $\mu_f$    &    Fluid viscosity    &    1    \\
            \bottomrule[1pt]
          \end{tabularx}
    \end{table} 
    
    We calculated the model problem in respectively $8\times 8$, $16\times 16$, $24\times 24$ and $32\times 32$ uniform triangulation meshes, and $L^\infty(L^2)$ errors and convergence orders are shown in \cref{tab:error_order_ex1}. To present the results, we plot the computational results of the displacement $\mathbf u$ and pressure $p$ at $\nu_\mathcal{D}=0.2$, $h=1/16$ and $t=0.001$ with heatmaps (see \cref{fig:ex1_result}). The results shows the convergence of the numerical model, and the superconvergence occurs for both displacement $\mathbf{u}$ and pressure $p$, that is, the higher spatial convergence error order than the interpolation error orders of the corresponding finite element spaces.
    
    \begin{table}[!htbp]
        \small
        \centering
        \caption{ Errors in $L^\infty(L^2)$-norm  and convergence order for test case 1}
        \label{tab:error_order_ex1}
        \begin{tabularx}{\textwidth}{XXXXXX}
            \toprule[1pt]
            $\nu$    &    $h$	 &	$\|\mathbf u_{h}-\mathbf u\|$	&	Order	&	$\|p_h-p\|$	&	Order	\\
            \midrule
            \multirow{4}{*}{0.2} & $1/8$	&	$3.4464\times10^{-5}$	&	$-$	&	$8.3059\times10^{-5}$	&	$-$	\\
            &    $1/16$	&	$4.9157\times10^{-6}$	&	2.8096	&	$1.7016\times10^{-5}$	&	2.2872	\\
            &    $1/24$	&	$1.5306\times10^{-6}$	&	2.8775	&	$7.7335\times10^{-6}$	&	1.9322	\\
            &    $1/32$	&	$6.8132\times 10^{-6}$	&	2.8135	&	$4.8641\times 10^{-6}$	&	1.6297	\\
            \midrule
            \multirow{4}{*}{0.49} & $1/8$	&	$4.4612\times10^{-4}$	&	$-$	&	$5.8174\times10^{-5}$	&	$-$	\\
            &    $1/16$	&	$3.0153\times10^{-5}$	&	3.8870  &	$1.3567\times10^{-5}$	&	2.1003	\\
            &    $1/24$	&	$6.0305\times10^{-5}$	&	3.9694	&	$6.6025\times10^{-6}$	&	1.7762	\\
            &    $1/32$	&	$1.9107\times 10^{-6}$	&	3.9953	&	$3.3601\times 10^{-6}$	&	2.1071	\\
            \midrule
            \multirow{4}{*}{0.499} & $1/8$	&	$4.8806\times10^{-3}$	&	$-$	&	$5.9630\times10^{-5}$	&	$-$	\\
            &    $1/16$	&	$3.1105\times10^{-4}$	&	3.9719  &	$1.3508\times10^{-5}$	&	2.1422	\\
            &    $1/24$	&	$5.9523\times10^{-5}$	&	4.0782	&	$6.6926\times10^{-6}$	&	1.7693	\\
            &    $1/32$	&	$1.8070\times 10^{-5}$	&	4.1439	&	$3.2951\times 10^{-6}$	&	2.4106	\\
            \midrule
            \multirow{4}{*}{0.4999} & $1/8$	&	$4.7204\times10^{-2}$	&	$-$	&	$1.1970\times10^{-4}$	&	$-$	\\
            &    $1/16$	&	$3.0931\times10^{-3}$	&	3.9318  &	$1.4237\times10^{-5}$	&	3.0717	\\
            &    $1/24$	&	$5.9446\times10^{-4}$	&	4.0676	&	$5.8680\times10^{-6}$	&	2.1860	\\
            &    $1/32$	&	$1.8671\times 10^{-4}$	&	4.0256	&	$3.9846\times 10^{-6}$	&	1.3455	\\
            \bottomrule[1pt]
        \end{tabularx}
    \end{table}
    
\begin{figure}[!htbp]
    \centering
	\begin{subfigure}{0.3\linewidth}
		\includegraphics[width=\linewidth]{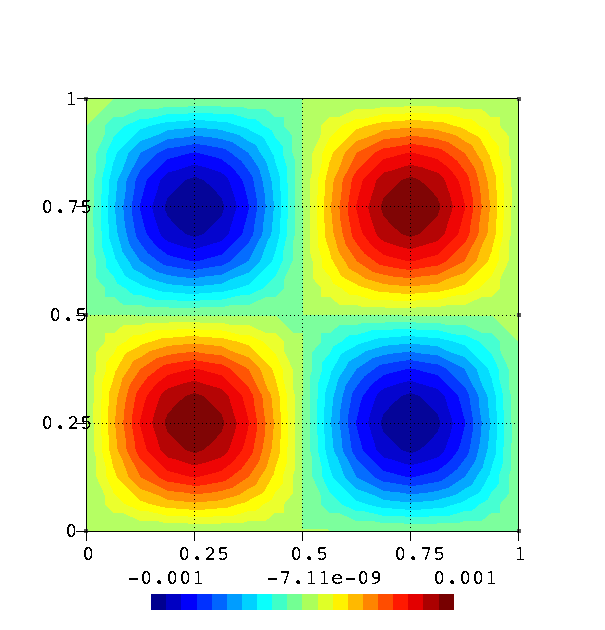}
		\caption{$u_{h,1}$}
	\end{subfigure}
	\begin{subfigure}{0.3\linewidth}
		\includegraphics[width=\linewidth]{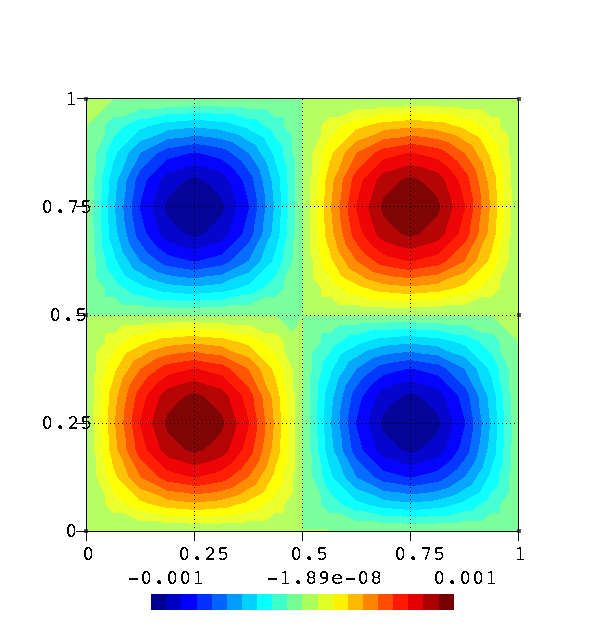}
		\caption{$u_{h,2}$}
	\end{subfigure}
	\begin{subfigure}{0.3\linewidth}
		\includegraphics[width=\linewidth]{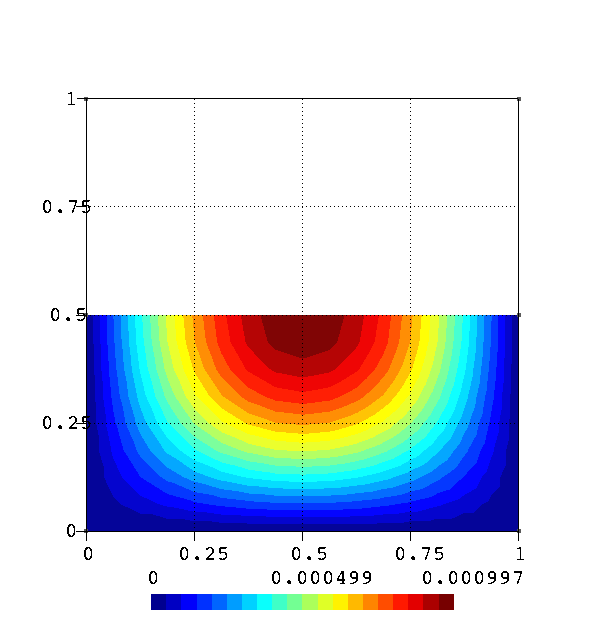}
		\caption{$p_h$}
	\end{subfigure}
	\caption{Plots of solutions at $\nu=0.02$, $h=1/16$ and $t=0.001$.}
	\label{fig:ex1_result}
\end{figure}

The following \cref{fig:ex1_error} presents the errors of displacement $\mathbf u$ and pressure $p$ respectively when adopting different Poisson's ratios $\nu_\mathcal{D}$. From this we find that the error of $\mathbf u$ increases and the error of $p$ is almost unchanged. However, the error or $p$ when $h=1/8$ and $\nu_\mathcal{D}=0.4999$ is much larger than other test positions. This exception can be explained by the influence of relatively large error of displacement $\mathbf{u}$ (greater than $10^{-2}$). We can from the above observations conclude that the model is robust to Poisson's ratio and can overcome the ``locking" of both displacement and pressure.

\begin{figure}[!htbp]
    \centering
	\begin{subfigure}{0.4\linewidth}
		\includegraphics[width=\linewidth]{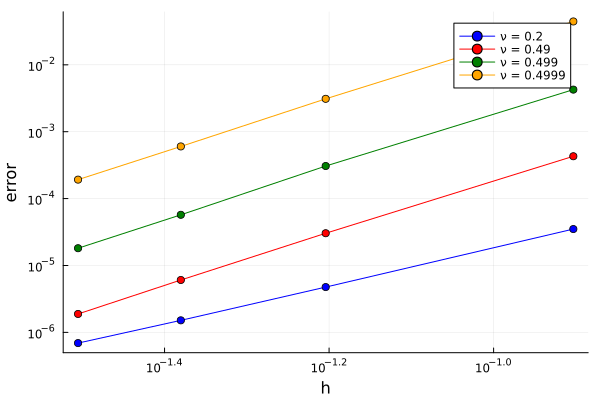}
		\caption{$\left\|\mathbf u_h-\mathbf u\right\|$}
	\end{subfigure}
	\begin{subfigure}{0.4\linewidth}
		\includegraphics[width=\linewidth]{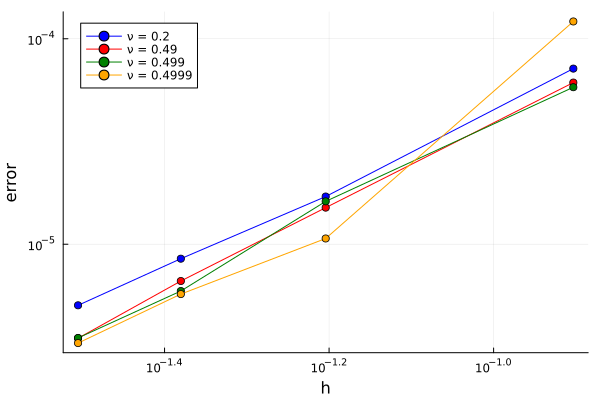}
		\caption{$\left\|p_h-p\right\|$}
	\end{subfigure}
 \caption{Error plots in $L^\infty(L^2)$-norm in test case 1}
	\label{fig:ex1_error}
\end{figure}

\subsection{Test Case 2: Lowest-order Finite Element Space}
    In this numerical test we consider adopting the lowest-order finite element space, that is, the ($\boldsymbol{P}_1$-$P_1$-$P_1$-$P_1$, $\boldsymbol{P_1}$-$P_1$) finite element space for spatial discretization, and the domain, exact solutions and parameter settings are all the same as test case 1 (see). We calculated again the model problem in respectively $8\times 8$, $16\times 16$, $24\times 24$ and $32\times 32$ uniform triangulation meshes, and $L^\infty(L^2)$ errors and convergence orders are shown in \cref{tab:error_order_ex2}. We can find that the model converges like the above settings of ($\boldsymbol{P}_2$-$P_1$-$P_1$-$P_1$, $\boldsymbol{P}_2$-$P_1$) finite element space, and the superconvergence phenomenon that has shown in test case 1 still exists in this lowest-order finite element case.

    \begin{table}[!htbp]
        \small
        \centering
        \caption{ Errors in $L^\infty(L^2)$-norm  and convergence order for test case 2}
        \label{tab:error_order_ex2}
        \begin{tabularx}{\textwidth}{XXXXXX}
            \toprule[1pt]
            $\nu$    &    $h$	 &	$\|\mathbf u_{h}-\mathbf u\|$	&	Order	&	$\|p_h-p\|$	&	Order	\\
            \midrule
            \multirow{4}{*}{0.2} & $1/8$	&	$5.4843\times10^{-4}$	&	$-$	&	$1.9126\times10^{-3}$	&	$-$	\\
            &    $1/16$	&	$1.7852\times10^{-4}$	&	1.6192	&	$6.4009\times10^{-4}$	&	1.5792	\\
            &    $1/24$	&	$7.6087\times10^{-5}$	&	2.1033	&	$2.9777\times10^{-4}$	&	1.8874	\\
            &    $1/32$	&	$4.1245\times 10^{-4}$	&	2.1284	&	$1.4564\times 10^{-4}$	&	2.4859	\\
            \midrule
            \multirow{4}{*}{0.49} & $1/8$	&	$4.8635\times10^{-4}$	&	$-$	&	$1.9548\times10^{-4}$	&	$-$	\\
            &    $1/16$	&	$1.3026\times10^{-4}$	&	1.9005  &	$5.5812\times10^{-5}$	&	1.8084	\\
            &    $1/24$	&	$5.4334\times10^{-5}$	&	2.1565	&	$2.0711\times10^{-5}$	&	2.4448	\\
            &    $1/32$	&	$2.9772\times 10^{-5}$	&	2.0911	&	$1.1630\times 10^{-5}$	&	2.0060	\\
            \midrule
            \multirow{4}{*}{0.499} & $1/8$	&	$5.7018\times10^{-4}$	&	$-$	&	$1.5800\times10^{-4}$	&	$-$	\\
            &    $1/16$	&	$1.4273\times10^{-4}$	&	1.9981  &	$3.1721\times10^{-5}$	&	2.3164	\\
            &    $1/24$	&	$5.7814\times10^{-5}$	&	2.2289	&	$1.2862\times10^{-5}$	&	2.2262	\\
            &    $1/32$	&	$3.2537\times 10^{-5}$	&	1.9981	&	$6.2831\times 10^{-6}$	&	2.4905	\\
            \midrule
            \multirow{4}{*}{0.4999} & $1/8$	&	$1.6353\times10^{-3}$	&	$-$	&	$1.4759\times10^{-4}$	&	$-$	\\
            &    $1/16$	&	$1.6725\times10^{-4}$	&	3.2894  &	$3.49312\times10^{-5}$	&	2.0790	\\
            &    $1/24$	&	$6.1010\times10^{-5}$	&	2.4872	&	$1.2877\times10^{-5}$	&	2.4612	\\
            &    $1/32$	&	$3.2578\times 10^{-5}$	&	2.1808	&	$6.2396\times 10^{-6}$	&	2.5185	\\
            \bottomrule[1pt]
        \end{tabularx}
    \end{table}

The following \cref{fig:ex2_error} presents the errors of displacement $\mathbf u$ and pressure $p$ respectively when adopting different Poisson's ratios $\nu_\mathcal{D}$. We can see that the errors of displacement $\mathbf{u}$ and pressure $p$ have the minimal change when $\nu_\mathcal{D}$ is gradually close to $0.5$. However, the error of $\mathbf{u}$ does not show an increasing phenomenon like test case 1, and the error of $p$ at $\nu_\mathcal{D}=0.2$ is much larger than other settings of $\nu_\mathcal{D}$. The results totally shows that the model can overcome the ``locking" of both displacement and pressure, and has stronger robustness when adopting the lowest-order finite element discretization than what test case 1 has shown.

    \begin{figure}[!htbp]
    \centering
	\begin{subfigure}{0.4\linewidth}
		\includegraphics[width=\linewidth]{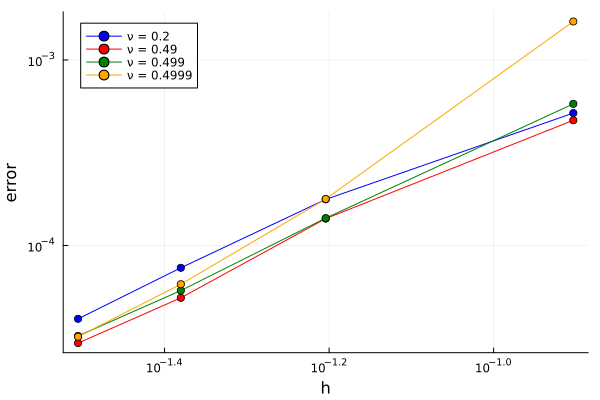}
		\caption{$\left\|\mathbf u_h-\mathbf u\right\|$}
	\end{subfigure}
	\begin{subfigure}{0.4\linewidth}
		\includegraphics[width=\linewidth]{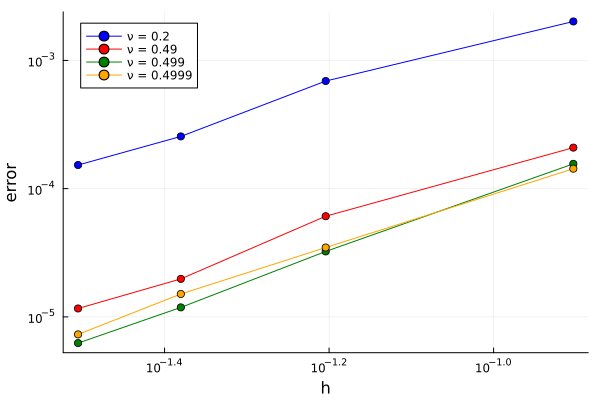}
		\caption{$\left\|p_h-p\right\|$}
	\end{subfigure}
	\label{fig:ex2_error}
 \caption{Error plots in $L^\infty(L^2)$-norm in test case 2}
\end{figure}

\subsection{Test Case 3: Barry–Mercer’s Problem}
    In this numerical test we consider the Barry–Mercer’s problem as the benchmark test problem. We also consider the unique square domain defined in \cref{ex:model_validation} (see \cref{fig:fig-barry-mercer}), and $T=1$. Source terms and boundary conditions are set in the following. 
    \begin{alignat*}{3}
        \mathbf f&\equiv\mathbf 0,	\\
        z&\equiv 0,	\\
        p&=0&&\quad\text{on }\partial\Omega^P\cap(\varGamma_1\cup\varGamma_3), \\
        \mathbf w(p)\cdot\mathbf n_{P\to E}&=0&&\quad\text{on }\varGamma, \\
        p&=p_2&&\quad\text{on }\varGamma_2,\ \text{where } p_2 = \begin{cases}
            \sin t&	\text{if }0.2\leq x\leq 0.8 \\
            0&\text{otherwise}
        \end{cases}\\
        u_1&=0&&\quad\text{on }\varGamma_1\cup\varGamma_3,	\\
        u_2&=0&&\quad\text{on }\varGamma_2\cup\varGamma_4,	\\
        \tilde{\boldsymbol{\sigma}}_P(\mathbf u_P,\xi_P)\cdot\mathbf n_P&=(0,\alpha p_2)^T&&\quad\text{on }\varGamma_2, \\
        \mathbf u_0(\mathbf x)&=\mathbf 0, \\
        p_0(\mathbf x)&=0,
    \end{alignat*}

    \begin{figure}[!htbp]
        \centering
        \begin{subfigure}{0.4\linewidth}
		\begin{tikzpicture}
            \draw (-3, -3) -- (3, -3);
            \draw (-3, 0) -- (3, 0);
            \draw (-3, 3) -- (3, 3);
            \draw (-3, -3) -- (-3, 3);
            \draw (3, -3) -- (3, 3);
            \draw (0, -0.3) node {$\varGamma$};
            \draw (0, 3.3)  node {$\varGamma_4$};
            \draw (-3.3, 0) node {$\varGamma_3$};
            \draw (3.3, 0) node {$\varGamma_1$};
            \draw (0, -3.3) node {$\varGamma_2$};
            \draw (0, -1.5) node  {$\Omega^P$};
            \draw (0, 1.5) node {$\Omega^E$};
        \end{tikzpicture}
        \caption{Domain and its partition}
        \label{fig:fig-barry-mercer}
	\end{subfigure}
     \begin{subfigure}{0.4\linewidth}
        \includegraphics[width=\linewidth]{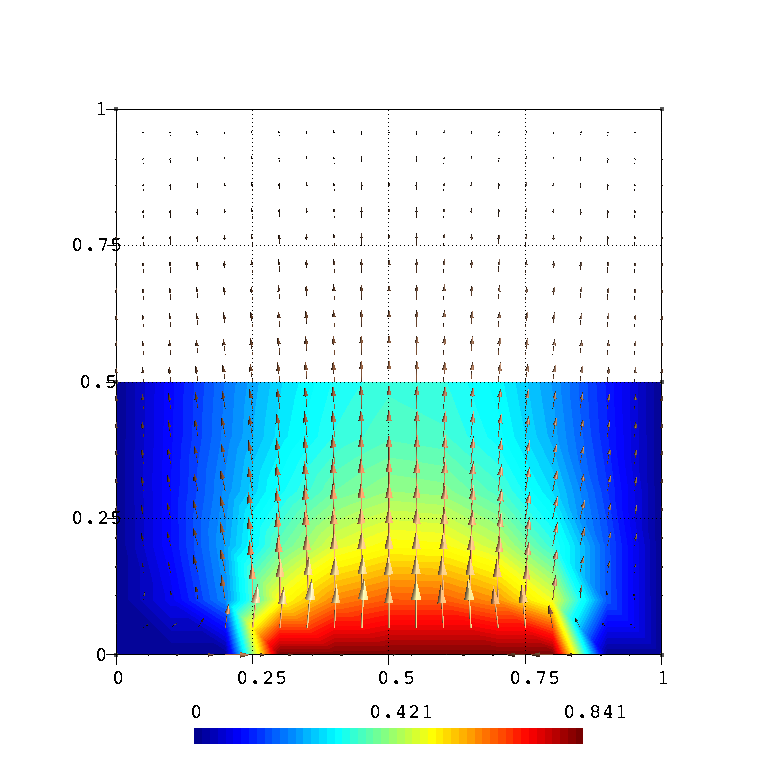}
        \caption{Numerical pressure $p$ (heatmap) and displacement $\mathbf u$ (arrows)}
        \label{fig:barrymercer}
     \end{subfigure}
    \caption{Domain and numerical result for Barry-Mercer's problem}
    \end{figure}
Note that Barry–Mercer’s problem has a unique solution that is given by a series (see \cite{phillips_overcoming_2009}), and the manufactured solutions in the elasticity domain can be found using the similar technique in \cref{ex:model_validation}. The computational result under the above settings is shown in \cref{fig:barrymercer}, where the computed pressure in $[0,1]\times[0,1/2]$ is shown by a heat map and the computed displacement in $[0,1]\times[0,1]$ is displayed by arrows. The results shows that \cref{alg:PCG-FETI} and \cref{alg:PCG-FETI-SIMPLE} performs well in computing the Barry-Mercer's problem and does not produce any oscillation in computed pressure.

\section{Conclusion}

In this paper, we proposed an time-efficient algorithm for solving a coupled poroelasticity and elasticity model with multiphysics finite element approximation in a Lagrange multiplier framework. The fully-discretized problem was reconstructed into a saddle point form, and then the specifically designed multiphysics FETI method was adopted for iterative solving. Numerical tests showed that the algorithm possess the time efficiency and high accuracy in computational results, and the ``locking" phenomenon was overcome according to the results under different Poisson's ratios. And the domain decomposition method revealed the strong potential for parallelization, and strategies and techniques for constructing the algorithm can be applied to the more complicated multiphysics domain decomposition problems.

\bibliographystyle{siam}
\bibliography{references.bib}


\end{document}